%
%  Changed according to Referee report of JAA, third submission
%  MARCH 2013
%  Iwasawa decomposition is elimined
%
\documentclass[12pt]{article}

\usepackage{amsmath,amsthm}
\usepackage{amssymb}
\usepackage{amsfonts}
\usepackage{amscd}
\usepackage{booktabs}
\usepackage{slashbox}
\usepackage{dsfont} %usato per \mathds

\usepackage{mathtools} %usato per \coloneqq e \DeclarePairedDelimiter
\input{xy}
\xyoption{all}

\newtheorem{thm}{Theorem}[section]
\newtheorem{theorem}[thm]{Theorem}

\newtheorem{corollary}[thm]{Corollary}

\newtheorem{proposition}[thm]{Proposition}
\theoremstyle{definition}
\newtheorem{definition}[thm]{Definition}
\newtheorem{example}[thm]{Example}

\newtheorem{remark}[thm]{Remark}

\newtheorem{observation}[thm]{Observation}
\newtheorem{definition-proposition}[thm]{Definition-Proposition}

% BOLD

\newcommand{\R} {\mathbf{R}}
\newcommand{\Z} {\mathbf{Z}}

\newcommand{\C} {\mathbf{C}}

\newcommand{\bA}{\mathbf{A}}

\newcommand{\bm} {\mathbf {m}}
\newcommand{\bp} {\mathbf {p}}

% CAL

\newcommand{\cJ}{{\mathcal J}}

\newcommand{\cO}{\mathcal{O}}

\newcommand{\cU}{\mathcal{U}}
\newcommand{{\cHol}}{{\cal{{H}}}}

% LETTERS

\newcommand{\al}{\alpha}
\newcommand{\be}{\beta}
\newcommand{\ga}{\gamma}

% FRAK

\newcommand{\fg}{\mathfrak g}

\newcommand{\fh}{\mathfrak h}
\newcommand{\fk}{\frak k}

\newcommand{\bt}{\overline t}
\newcommand{\bu}{\overline u}
\newcommand{\bv}{\overline v}

% CATEGORIES
\newcommand{\salg} {\mathrm{ (salg) }}
\newcommand{\sets} {\mathrm{ (sets) }}

\newcommand\sschemes{ \mathrm{(sschemes)} }

\newcommand{\grps}{ \mathrm{(groups) }}

% LINEAR ALGEBRA

\newcommand{\rGL}{\mathrm{GL}}

\newcommand{\rSL}{\mathrm{SL}}

\newcommand{\rosp}{{\mathrm{osp}}}

\newcommand{\Hom}{\mathrm{Hom}}
\newcommand{\rEnd}{{\hbox{End}}}

\newcommand{\rsl}{{\mathrm{sl}}}

\newcommand{\Ad}{\mathrm{Ad}}

\newcommand{\Lie}{\mathrm{Lie}}

%\newcommand{\Tr}{\mathrm{tr}}

% UNDERLINE

\newcommand{\uspec}{\mathrm{\underline{Spec}}}

\newcommand{\btheta}{\bar{\theta}}
\newcommand{\etab}{\bar{\eta}}

 %categoria
 %{\mathcal{C}^\infty} %smooth things

\usepackage{color}

 %\to (corto nel testo e lungo nelle equazioni)
\let\mto\mapsto \renewcommand{\mapsto}{\mathchoice{\longmapsto}{\mto}{\mto}{\mto}} %\mapsto (corto nel testo e lungo nelle equazioni)

 %(
 %)
 %[
 %]
 %{
 %}

 %sweedler dual

 %puntino segnaposto

 %opposto
 %universal enveloping algebra
 %algebraic tensor product

\newcommand{\defi} {\mathrm{def}}
\newcommand{\lra} {\longrightarrow}

\newcommand{\spec}{{\mathrm{Spec}}}

\newcommand{\rd}{{\mathrm{red}}}
\newcommand{\rsu}{{\mathrm{su}}}
\newcommand{\bs}{{\overline{s}}}

\begin{document}

\Large
\centerline{\bf
{Compact forms of Complex Lie Supergroups}}

\normalsize
\bigskip

\centerline{ R. Fioresi
}

\smallskip

\centerline{\it Dipartimento di Matematica, Universit\`{a} di
Bologna }
 \centerline{\it Piazza di Porta S. Donato, 5. 40126 Bologna. Italy.}
\centerline{{\footnotesize e-mail: rita.fioresi@UniBo.it}}

\bigskip

\begin{abstract}
In this paper we construct compact forms associated with a 
complex Lie supergroup with Lie superalgebra of classical type.
%and we examine the corresponding Iwasawa decompositions.
\end{abstract}

\medskip

\section{Introduction}

In the ordinary theory of semisimple Lie groups, we can associate
to any semisimple complex Lie group a corresponding real compact form.
It is then very natural to ask, whether the same statement is true
in the context of Lie supergroups. Given the rigidity of supergeometry
requirements, it turns out that not all of the Lie supergroups with
Lie superalgebras of classical type admit such a compact form, however
under some hypothesis it is possible to grant such existence.

\medskip

Our starting point is a complex analytic supergroup $G$ with $\fg=\Lie(G)$ of
classical type. Such $G$ corresponds to a unique
affine algebraic supergroup constructed via the Chevalley recipe
as detailed in \cite{fg}, \cite{fg2}.
Once a suitable real form $\fk$ of $\fg$ %, called a {\sl compact form}, 
is obtained through an involution, we proceed and build the corresponding
compact supergroup $K$, with $\fk$ as its Lie superalgebra.
The superalgebra of global sections of $K$ is precisely given
by the global sections of $G$ invariant under the involution
defined through the involution of $\fg$ defining $\fk$.
We repeat the same construction using the Super Harish-Chandra Pairs
(SHCP) terminology establishing a complete equivalence between the
two different approaches to this problem.

\bigskip

{\bf Acknowledgements}. We wish to thank prof. V. S. Varadarajan,
prof. C. Carmeli and prof. F. Gavarini
for helpful comments. We are also indebted to our referee for
very valuable suggestions.

\section{Preliminaries}
\label{preliminaries}

We review few facts about supergeometry, sending the reader to
\cite{ccf, vsv} for more details.

\medskip

Let $k=\R$ or $\C$ be our ground field.

\medskip

A {\it superalgebra} $A$ is a $\Z_2$-graded algebra, $A=A_0 \oplus A_1$,
where  $p(x)$ denotes the parity of an homogeneous element $x$.
The superalgebra $A$ is said to be {\it commutative} if for any two
homogeneous elements $x, y$, $xy=(-1)^{p(x)p(y)}yx$.
All of our superalgebras are assumed to be commutative
unless otherwise specified and their
category will be denoted by $\salg_k$.
%and the category of sets is denoted by (sets).

\medskip

\begin{definition}
A {\it superspace} $S=(|S|, \cO_S)$ is a topological space $|S|$
endowed with a sheaf of superalgebras $\cO_S$ such that the stalk at
a point  $x\in |S|$ denoted by $\cO_{S,x}$ is a local superalgebra
for all $x \in |S|$, i. e. it has a unique (two-sided) ideal.
%\end{definition}
%\begin{definition} 
A {\it morphism} $\phi:S \lra T$ of superspaces is given by
$\phi=(|\phi|, \phi^*)$, where $\phi: |S| \lra |T|$ is a map of
topological spaces and $\phi^*:\cO_T \lra \phi_*\cO_S$ is
a local sheaf morphism, that is,
$\phi_x^*(\bm_{|\phi|(x)})=\bm_x$, where $\bm_{|\phi|(x)}$ and
$\bm_{x}$ are the maximal ideals in the stalks $\cO_{T,|\phi|(x)}$
and $\cO_{S,x}$ respectively.

\medskip

If $\cJ_S$ is the ideal sheaf generated by the odd sections
in $\cO_S$, we define the ordinary ringed space
$S_0=(|S|, \cO_S/\cJ_S)$ as the \textit{ringed space underlying} 
the superspace $S$.
\end{definition}

The most important examples of superspaces are given by
{\sl supermanifolds} and {\sl superschemes}. %Let us introduce them.

\begin{example} \label{superspace}
1. Let $k=\R$ or $\C$.
The superspace $k^{p|q}$ is the topological space $k^p$ endowed with
the following sheaf of superalgebras. For any $U
\subset_\mathrm{open} k^p$
$$
\cO_{k^{p|q}}(U)=\cO_{k^p}(U)\otimes k[\xi_1, \dots, \xi_q],
$$
where $k[\xi_1, \dots ,\xi_q]$ is the exterior algebra 
generated by the $q$ variables $\xi_1, \dots,
\xi_q$ and $\cO_{k^p}$ denotes the ordinary $C^\infty$ or analytic
sheaf over $k^p$.

\medskip

2. $\uspec A$. 
Let $A$ be an object of $\salg_k$. Since $A_0$ is an algebra, we can
consider the topological space $$\spec(A_0)=\{\hbox{prime ideals }
\bp\subset A_0\}$$  with its structural sheaf $\cO_{A_0}$. The stalk $A_{\bp}$
of the structural sheaf at the prime $\bp\in \spec(A_0)$ is the
localization of $A_0$ at $\bp$.
As for any superalgebra, $A$ is a module over $A_0$, and we have
a sheaf $\widetilde A$ of $\cO_{A_0}$-modules over $\spec A_0$
with stalk $A_{\bp}$, the localization of the $A_0$-module $A$ over
each prime $\bp \in \spec(A_0)$.
$\uspec A=_{\defi}(\spec
A_0,\widetilde A)$ is a superspace.
\end{example}

\begin{definition}\label{supermanifold}
A  \textit{differentiable (resp. analytic) supermanifold} 
of dimension $p|q$ is a superspace $M=(|M|, \cO_M)$
which is locally isomorphic to the superspace
$k^{p|q}$, where the ordinary sheaf over $k^p$ is
taken to the be the $C^\infty$ or the analytic sheaf
respectively in case of $k=\R$ or $k=\C$.

\medskip

An \textit{affine superscheme} is a superspace isomorphic to $\uspec A$
for a superalgebra $A$.
A  \textit{superscheme} is a superspace 
which is locally isomorphic to affine superschemes.
%the superspace $\uspec A$ for
%$A$ superalgebra (which varies with the point).

\end{definition}

Morphisms  of supermanifolds  or of superschemes are just the
morphism of the corresponding superspaces.

\begin{example} \label{affinesuperspace}
1. {\sl Affine superspace}. Define 
$$
\bA^{m|n}:=
\uspec k[x_1, \dots, x_m,\xi_1, \dots, \xi_m]
$$ 
(the odd variables $\xi_1 \dots \xi_n$ anticommute). This superscheme
is called the \textit{affine superspace} of dimension $m|n$.
Its underlying scheme is the affine space $\bA^m$ of dimension $m$.

\medskip

2. {\sl General linear supergroup}. Let $V=V_0 \oplus V_1$ be
a finite dimensional vector superspace.
Define 
$$
\rGL(V)=(|\rGL(V_0)| \times  |\rGL(V_1)| , 
\cO_{\rEnd(V)}|_{|\rGL(V_0)| \times  |\rGL(V_1)|})
$$ 
where $\rEnd(V)$
is the super vector space of endomorphism of $V$ and $\cO_{\rEnd(V)}$
denotes the supersheaf of differential (resp. analytic or algebraic)
sections on $\rEnd(V)$.
Since $|\rGL(V_0)| \times  |\rGL(V_1)|$ is open in $|\rEnd(V)|$
we have a superspace, which is a supermanifold or an affine superscheme
resp., depending on whether we take $\rEnd(V)$ as in Ex. \ref{superspace} (1)
or as in Ex. \ref{affinesuperspace}, (1).
\end{example}

Next we want to introduce the concept of {\it functor
of points} of a superscheme (resp. supermanifold).

\begin{definition}
The {\it functor of points} of a
superscheme $X$ is a representable functor from the
category of superschemes to the category of sets:
$$
\begin{CD}
h_X:\sschemes^o @>>>\sets \\
T @>>>h_X(T)=\Hom(T,X)
\end{CD}
$$
and $h_X(\phi)f=f \circ \phi$ for any morphism $\phi:T \lra S$.
The elements in  $h_X(T)$ are called the \textit{$T$-points} of $X$.
(The label `$\,{}^o\,$' means that we are taking the opposite category.)
\end{definition}

In the very same way it is possible to define the functor of points
of a supermanifold. 

\medskip

It is customary to use the same letter to denote a supermanifold (or
a superscheme) and its functor of points. We shall use two different
symbols $X$ and $h_X$ only when strictly necessary.

\begin{observation} \label{facts}
The functor of points of a superscheme is determined by
its restriction to the category of affine superschemes, which is equivalent
to the category of affine superalgebras $:\salg_k$ (see \cite{ccf} ch. 10).
Hence 
given a superscheme $X$, we can
equivalently define its functor of points as:
$$
\begin{CD}
h_X:\salg_k @>>>\sets \\
A @>>>h_X(A)=\Hom(\uspec A, X)
\end{CD}
$$
and $h_X(\phi)f=f \circ \uspec \phi$ for any morphism $\phi:A \lra B$.
The elements in  $h_X(A)$ are called the \textit{$A$-points} of $X$.
When $X$ is itself an affine superscheme, its functor of points is
{\sl representable}:
$$
\begin{CD}
h_X:\salg_k @>>>\sets \\
A @>>>h_X(A)=\Hom(\cO(X), A)
\end{CD}
$$
and $h_X(\phi)f=\phi \circ f$ for any morphism $\phi:A \lra B$.
$\cO(X)$ denotes the superalgebra of global section of $\cO_X$.
The functor of points of a superscheme $F:\salg_k
\lra \sets$ is a local functor i. e. it has the sheaf property.
In other words let $A \in \salg_k$ and
$(f_i,\; i \in I)=(1)=A$. Let
$\phi_{k}:A \lra A_{f_k}$ be the natural map of the algebra $A$ to its
localization and also $\phi_{kl}:A_{f_k} \lra A_{f_kf_l}$.
Then, for a family
$\al_i\in F(A_{f_i})$, such that $F(\phi_{ij})(\al_i)=F(\phi_{ji})(\al_j)$
there exists $\al\in F(A)$ such that
$F(\phi_i)(\al)=\al_i$.
A generic functor $F: \salg_k \lra \sets$ is the functor of points
of a superscheme if and only if it is local and it admits a cover
by open affine subfunctors (i.e. the functor of points of an affine
superscheme) (ref. \cite{ccf} ch. 10, Representability Criterion).

\medskip

Finally we recall that by Yoneda's
lemma, given two superschemes (resp. supermanifolds) $S$ and $T$, the natural
transformations $h_S \lra h_T$ are in one-to-one correspondence with
the superscheme (resp. supermanifold) 
morphisms $S \lra T$. Consequently two superschemes (resp. supermanifolds) are
isomorphic if and only if their functor of points are isomorphic.

\end{observation}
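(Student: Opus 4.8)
The plan is to deduce every assertion from standard scheme-theoretic machinery, transported across the anti-equivalence between affine superschemes and commutative superalgebras (all of this is in \cite{ccf}, ch.~10). I would base the argument on two facts established beforehand. The first is that $\uspec$ and its quasi-inverse $\cO(\blank)$ realize the category of affine superschemes as $\salg_k^o$; in particular $\Hom(\uspec A,\uspec B)\cong\Hom_{\salg_k}(B,A)$ naturally in $A$ and $B$. The second — really the workhorse — is a locality lemma: for any superspace $S$ and any open cover $\{U_i\}$ of $|S|$, a morphism $S\to X$ is the same datum as a family of morphisms $U_i\to X$ agreeing on the overlaps $U_i\cap U_j$. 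This lemma is a formal consequence of the sheaf axioms for $\cO_S$ together with the locality condition on $\phi^*$ built into the definition of a morphism of superspaces, and it holds verbatim for supermanifolds, since their morphisms are by definition morphisms of the underlying superspaces.

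Granting these, the reformulation of the functor of points goes as follows. By the locality lemma applied to an affine open cover of $T$, a morphism $T\to X$ is determined by, and can be reassembled from, its restrictions to affine opens; hence $h_X$ is determined by its restriction to affine superschemes. Precomposing that restriction with $\uspec$ gives the functor $A\mapsto\Hom(\uspec A,X)$ on $\salg_k$, and when $X$ is affine the first fact turns this into $A\mapsto\Hom_{\salg_k}(\cO(X),A)$, i.e.\ representability; the composite is covariant in $A$ because $\uspec$ and $\cO(\blank)$ are each contravariant.

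The sheaf property of $F=h_X$ is then just the locality lemma applied to one particular cover. The hypothesis $(f_i,\ i\in I)=(1)$ means exactly that the basic opens $D(f_i)\subset\spec A_0$ cover $\spec A_0$, so $\{\uspec A_{f_i}\}$ is an open cover of $\uspec A$ with pairwise intersections $\uspec A_{f_if_j}$; a compatible family $\al_i\in h_X(A_{f_i})$ is therefore a family of morphisms agreeing on overlaps, and glues to a unique $\al\in h_X(A)$. For Yoneda's lemma I would simply specialize the classical statement to the category of superschemes (resp.\ supermanifolds): sending a natural transformation $h_S\to h_T$ to the image of $\id_S$ is a bijection onto $h_T(S)=\Hom(S,T)$, so $X\mapsto h_X$ is fully faithful, and in particular $h_S\cong h_T$ forces $S\cong T$.

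The one genuinely substantive point — the step I expect to be the main obstacle, since everything above is essentially bookkeeping — is the representability criterion. One direction is immediate: $h_X$ is local by the previous paragraph, and the affine opens of $X$ provide an affine open cover of $h_X$ by open subfunctors. For the converse one must reconstruct a superscheme from the functor: given a local $F$ with an affine open cover $\{h_{U_i}\hookrightarrow F\}$ by subfunctors, one would glue the topological spaces $|U_i|$ along the open subspaces representing the fibre products $h_{U_i}\times_F h_{U_j}$, glue the structure sheaves over this datum, and then use locality of $F$ to produce and uniquely identify the isomorphism $h_X\cong F$. The delicate parts are verifying that open subfunctors correspond to genuinely open subspaces (a statement about the reduced spaces $\spec A_0$) and that the gluing data satisfy the cocycle condition. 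Rather than carry this out, I would cite the Representability Criterion of \cite{ccf}, ch.~10.
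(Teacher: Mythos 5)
Your proposal is correct and, where it matters, follows the same route as the paper: the Observation is a recollection of standard facts whose only substantive ingredient, the Representability Criterion, both you and the paper delegate to \cite{ccf} ch.~10. The remaining points (the anti-equivalence $\Hom(\uspec A,\uspec B)\cong\Hom_{\salg_k}(B,A)$, the gluing lemma for morphisms out of an open cover, the identification of $(f_i)=(1)$ with the cover of $\uspec A$ by the $\uspec A_{f_i}$, and Yoneda) are handled by exactly the standard bookkeeping you describe, so no further comparison is needed.
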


\section{Analytic and Algebraic Supergroups} 
\label{analytic-algebraic}

A supermanifold (resp. affine superscheme) $G$ is a \textit{supergroup}
when its functor of points $h_G$ is valued in the category of groups; 
this is equivalent to say that the
superalgebra of global sections $\cO(G)$ has a Hopf superalgebra 
structure (see \cite{ccf,vsv} for more details on Lie, analytic and
algebraic supergroups). We say that the supergroup scheme $G$ is an 
\textit{algebraic supergroup} when
$G_0$ is algebraic and $\cO(G)$ is finitely generated as $\cO(G)_0$-module.

\medskip

A supergroup $G$ is called a \textit{matrix supergroup}
if $G$ admits an embedding into $\rGL(V)$, where we fix a homogeneous
basis for the super vector space $V$.

\medskip

From now on all of our supergroups are assumed to be matrix supergroups.

\medskip

We now want to establish a correspondence between complex analytic
Lie supergroups and algebraic supergroup schemes.
The next
observation is crucial in the developing of our work.

\begin{observation} \label{anal-alg}
If $G$ is an affine algebraic complex supergroup, $G$ has a unique analytic
supergroup structure. This happens in the same way as in the
ordinary setting and the details of the construction appear
in \cite{fi1}. Vice-versa if $G=(|G|,\cO_G)$ 
is a complex analytic matrix supergroup
(i.e. $G$ admits an embedding into some $\rGL(V)$),
there exists a unique algebraic complex supergroup $G_a$ associated with
it.  In fact the %superalgebra of global section 
sheaf of superalgebras $\cO_G$ splits,
that is $\cO_G \cong \cO_{G_{\rd}} \otimes \wedge(\xi_1 \dots \xi_q)$,
where $G_{\rd}$ denotes the ordinary analytic group underlying $G$.
Since $G_{\rd}$ has a unique algebraic group $G_{a,\rd}$ associated with it,
we can define the sheaf of $G_a$ as 
$\cO_{G_a}=\cO_{G_{a,{\rd}}} \otimes \wedge(\xi_1 \dots \xi_q)$
and $G_a=\uspec \cO({G_a})$.

\medskip

Hence we can view the same supergroup $G$ as an analytic or an 
algebraic supergroup. We shall use the same letter to denote both,
when the context makes clear whether we are considering the
analytic of the algebraic structure on $|G|$.
\end{observation}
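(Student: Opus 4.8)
The plan is to establish the two implications separately, reducing in each direction to the classical equivalence between affine algebraic complex groups and complex linear analytic groups applied to the reduced group $G_{\rd}$, and using throughout that the structure sheaf of a matrix supergroup splits off its odd part.

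I would first treat the passage from the algebraic to the analytic category. Given an affine algebraic complex supergroup $G$, fix an embedding $G \hookrightarrow \rGL(V)$; then $\cO_G$ splits as $\cO_{G_{\rd}} \otimes \wedge(\xi_1, \dots, \xi_q)$ with $G_{\rd}$ affine algebraic. By the classical theory the group $G_{\rd}$ carries a canonical complex linear analytic structure $G_{\rd}^{an}$ whose structure sheaf is the analytification of $\cO_{G_{\rd}}$, and one sets $\cO_{G^{an}} := \cO_{G_{\rd}^{an}} \otimes \wedge(\xi_1, \dots, \xi_q)$. The multiplication, unit and inverse of $G$ are morphisms of affine superschemes, hence are prescribed by the images of the finitely many coordinate generators of $\cO(G)$ pulled back from $\rGL(V)$; those images are given by the usual polynomial matrix formulas, so analytifying them produces analytic morphisms of $G^{an}$, and since analytification is a functor the group axioms carry over unchanged. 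This makes $G^{an}$ an analytic supergroup with $\Lie(G^{an}) = \Lie(G)$, and uniqueness follows from the uniqueness of the analytification of the affine superscheme $G$ together with the functoriality just invoked, so that any analytic supergroup structure compatible with the algebraic one must coincide with $G^{an}$. The fully detailed version of this construction is the one referred to in \cite{fi1}.

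For the converse, let $G = (|G|, \cO_G)$ be a complex analytic matrix supergroup with a chosen embedding $G \hookrightarrow \rGL(V)$. As noted in the statement, $\cO_G$ splits as $\cO_{G_{\rd}} \otimes \wedge(\xi_1, \dots, \xi_q)$, where $G_{\rd}$ is the underlying analytic Lie group, realised as a closed analytic subgroup of $\rGL(V_0) \times \rGL(V_1)$; by the classical equivalence this $G_{\rd}$ has a unique affine algebraic complex group $G_{a,\rd}$ attached to it. One then sets $\cO_{G_a} := \cO_{G_{a,\rd}} \otimes \wedge(\xi_1, \dots, \xi_q)$ and $G_a := \uspec \cO(G_a)$, and transports the supergroup structure as follows: the multiplication morphism $m \colon G \times G \to G$, being the restriction of matrix multiplication on $\rGL(V)$, is given on the coordinate generators of $\cO(G)$ by polynomial formulas, hence it descends to a morphism of affine superschemes $G_a \times G_a \to G_a$, and likewise for the unit and the inverse. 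Since the group axioms for these maps hold on the analytic side and the relevant generators generate $\cO(G_a)$, they persist, so $G_a$ is an algebraic supergroup whose analytification is, by construction, $G$. Uniqueness of $G_a$ is inherited from that of $G_{a,\rd}$, from the rigidity of the exterior factor, and from the fact that the structure morphisms are determined by their values on generators.

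The routine points---functoriality of analytification, compatibility of the splitting with the group operations, and the bookkeeping with the exterior generators---present no real difficulty. The step I expect to be the crux, and the only place where the matrix hypothesis is genuinely used, is the verification that the comultiplication, counit and antipode (equivalently, the multiplication, unit and inverse morphisms) preserve algebraicity; this is precisely what lets one reduce the question to the coordinate functions of $\rGL(V)$, on which all the group laws are manifestly polynomial. The input that the reduced group $G_{\rd}$ already admits a unique algebraic structure is classical and is taken for granted here, exactly as in \cite{fi1}.
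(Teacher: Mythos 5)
Your argument follows the paper's own justification step for step: the splitting $\cO_G \cong \cO_{G_{\rd}}\otimes\wedge(\xi_1\dots\xi_q)$, the classical unique algebraization of the reduced matrix group $G_{\rd}$, and the definition $\cO_{G_a}=\cO_{G_{a,\rd}}\otimes\wedge(\xi_1\dots\xi_q)$ with $G_a=\uspec\cO(G_a)$, the group structure being transported through the coordinates of $\rGL(V)$. The paper records nothing beyond this sketch (deferring the first direction to \cite{fi1}), so your additional detail on how the comultiplication, counit and antipode descend is consistent with, and only slightly more explicit than, what is written there.
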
 

Assume now that $G$ is a complex Lie supergroup, 
with Lie superalgebra $\fg$ of classical
type, that is $\fg$ is non abelian simple (no nontrivial ideals) and
$\fg_1$ is completely reducible as $\fg_0$-module. 
All of what we say holds also for $G$ with Lie superalgebra which is a
direct sum of superalgebras of classical type. We shall neverthless,
for clarity of exposition, examine only the case of $\fg$ of
classical type.
According to
\cite{kac} we have a complete list of such Lie superalgebras:  

\begin{theorem}
\label{classificationtheorem}
The complex Lie superalgebras of classical type  are either isomorphic 
to a simple Lie algebra or to one of the following 
Lie superalgebras:
$$  
\begin{array}{c}
A(m,n) \, ,  \;\; m \! \geq \! n \! \geq \! 0 \, , \, m+n > 0 \, ;  \quad  
B(m,n) \, ,  \;\; m \geq 0, n \geq 1 \, ;   \quad  C(n) \, ,  \;\; n \geq 3 
\\ \\
D(m,n) \, ,  \;\; m \geq 2 , \, n \geq 1 \, ;  \qquad  
P(n) \, ,  \;\; n \geq 2 \, ;   \quad  Q(n) \, ,  \;\; n \geq 2  
\\ \\
F(4) \; ;  \qquad  G(3) \; ;  \qquad  
D(2,1;a) \, ,  \;\; a \in k \setminus \{0, -1\}. 
\end{array} 
$$
\end{theorem}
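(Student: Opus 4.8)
The result is Kac's classification of simple Lie superalgebras \cite{kac}, and the plan is to reconstruct his argument, which converts the problem into a question about faithful completely reducible representations of reductive Lie algebras. Let $\fg=\fg_0\oplus\fg_1$ be of classical type. First I would extract the structural constraints the hypotheses force. Since $\fg$ is simple, the kernel of the $\fg_0$-action on $\fg_1$ is an ideal of $\fg$ lying in $\fg_0$, hence it vanishes --- unless $\fg_1=0$, which is the case of an ordinary simple Lie algebra, listed separately --- so $\fg_0$ acts \emph{faithfully} on $\fg_1$. As this action is by hypothesis completely reducible, $\fg_0$ is a reductive Lie algebra. The restriction of the bracket to the odd part is, by the super-antisymmetry sign $(-1)^{1\cdot1}=-1$, a $\fg_0$-equivariant \emph{symmetric} map $b\colon S^2\fg_1\to\fg_0$; since $b(S^2\fg_1)\oplus\fg_1$ is an ideal of $\fg$, simplicity forces $b$ to be surjective, and polarizing the super-Jacobi identity on three odd arguments shows that the only relation left to impose is the cubic identity $[b(x,x),x]=0$ for all $x\in\fg_1$ (the remaining instances of Jacobi reduce to the Jacobi identity of $\fg_0$, to $\fg_1$ being a $\fg_0$-module, and to $b$ being equivariant). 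Thus the classification amounts to listing the faithful completely reducible modules $\fg_1$ of reductive $\fg_0$ equipped with a surjective equivariant symmetric $b$ satisfying $[b(x,x),x]=0$ for which $\fg$ is simple.

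I would then divide into two cases according to the reducibility of $\fg_1$. If $\fg_1$ is $\fg_0$-reducible (Type I), one shows $\fg$ admits a consistent $\Z$-grading $\fg=\fg_{-1}\oplus\fg_0\oplus\fg_{+1}$ of depth one with $\fg_{\pm1}$ irreducible $\fg_0$-modules, so that $\fg$ is determined by $\fg_{+1}$ together with the equivariant bracket $\fg_{+1}\otimes\fg_{-1}\to\fg_0$; classifying which irreducible representations of reductive Lie algebras fit into such a depth-one graded simple $\fg$ --- roughly, those for which $\fg_{+1}\otimes\fg_{-1}$ contains the adjoint with admissible multiplicity and the induced bracket closes --- yields the series $A(m,n)$, $C(n)$ and the strange series $P(n)$. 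If instead $\fg_1$ is $\fg_0$-irreducible (Type II), then a central part of $\fg_0$ would act on $S^2\fg_1$ by twice the character by which it acts on $\fg_1$ but trivially on the adjoint module $\fg_0$, so surjectivity of $b$ kills that character and faithfulness then forces $\fg_0$ to be \emph{semisimple}; moreover $\fg_1$ is self-dual and $\fg_0$ embeds in $S^2\fg_1$. Using Kac's weight- and dimension-estimates one lists the finitely many irreducible self-dual modules of semisimple Lie algebras for which this can occur and checks the cubic identity in each; this produces $B(m,n)$, $D(m,n)$, the strange $Q(n)$, and the exceptionals $F(4)$, $G(3)$, $D(2,1;a)$.

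I expect the crux to be the strange and exceptional cases, where $\fg_0$ is pinned down to a small semisimple Lie algebra --- such as $\so(7)\oplus\fsl(2)$, $\mathrm{G}_2\oplus\fsl(2)$, or $\fsl(2)^{\oplus3}$ --- acting on a prescribed small irreducible module $\fg_1$. Here one must work explicitly with the invariant tensors to show that the space of $\fg_0$-equivariant candidate brackets $b$ is low-dimensional and that the cubic identity $[b(x,x),x]=0$ selects exactly the admissible normalizations. For $\fsl(2)^{\oplus3}$ acting on the triple tensor product of defining representations this computation uncovers a genuine projective one-parameter family of solutions, the superalgebras $D(2,1;a)$; one then checks that for the excluded values $a\in\{0,-1\}$ (and, projectively, $a=\infty$) the construction fails to be simple --- the corresponding $\fsl(2)$-factor splitting off as an ideal --- so that these are correctly omitted from the list. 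Finally, establishing completeness is a finite but lengthy case analysis organized by the simple ideals of $\fg_0$ and by the highest weights of the irreducible summands of $\fg_1$, using that such a highest weight must be small relative to $\fg_0$ in order for $S^2\fg_1$ (respectively $\fg_1\otimes\fg_1^{\ast}$) to contain the adjoint representation.
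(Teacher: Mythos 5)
The paper does not prove this theorem---it is quoted verbatim from Kac \cite{kac}---and your outline is an accurate reconstruction of Kac's actual argument: reduce to a faithful completely reducible $\fg_0$-module $\fg_1$ with a surjective equivariant symmetric bracket satisfying the cubic identity, split into the cases $\fg_1$ reducible/irreducible, and run the weight-theoretic case analysis. The structural reductions you give (faithfulness, reductivity, surjectivity of $b$, semisimplicity of $\fg_0$ in the irreducible case, the exclusion of $a\in\{0,-1\}$ for $D(2,1;a)$) are all correct; the only part left as a gesture is the exhaustive enumeration of admissible modules, which is indeed the long combinatorial core of Kac's paper and could not reasonably be reproduced here.
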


\medskip

Any complex affine algebraic supergroup with Lie superalgebra of 
classical type can be built via the Chevalley's supergroups 
recipe (ref. \cite{fg2}).
Briefly, this amounts to choose a suitable complex representation $V$
of the complex Lie superalgebra $\fg$ of classical type with root system
$\Delta$. If $\salg_k$ denotes the category of commutative superalgebras,
the functor of points 
of the Chevalley supergroup $G:\salg_k \lra \sets$ is defined as:
$$
G(A)=\langle G_0(A), \, 1+\theta_\al X_\al, \, \al \in \Delta_1, \,
\theta_\al \in A_1 
\rangle \,
\subset \, \rGL(V)(A), \qquad A \in \salg_k
$$
where as usual we omit the representation and write just $X_\al$ for
the action of the root vector $X_\al$ on $V$ in a fixed Chevalley basis
(see \cite{fg} for the definition of Chevalley basis in the super
context). Since $G(A) \subset \rGL(V)(A)$, the definition on the
morphism is clear.

\medskip

In \cite{fg} we prove that such a functor is representable, so that
it is indeed an algebraic supergroup.

\section{Compact form of a Lie superalgebra} \label{compactform}

Assume $G$ is a complex analytic Lie supergroup, $\fg$ its Lie superalgebra,
$\fg$ of classical type.

\begin{definition} \label{defcompactliealg}
We say that the real Lie superalgebra $\fk \subset \fg$ is a \textit{compact
real form} of $\fg$ if
\begin{itemize}

\item $\fk_0$ is of compact type (i. e. $\Ad(\fk_0)$ is compact);  

\item $\C \otimes_\R \fk=\fg$.
\end{itemize}
We say that a real Lie supergroup $K$ is \textit{maximal compact} in $G$
if $\Lie(K)$ is a compact real form of $\fg$.
\end{definition}

As we will see in Obs. \ref{osp-counterex}, contrary
to what happens in the ordinary setting, the existence of compact real forms
is not granted for $\fg$ of every classical type.

\medskip

We can determine a compact real
form of a Lie superalgebra of classical type via an involution.
Let $\fh$ be a CSA of $\fg$, $\Delta=\Delta^+ \cup \Delta^-$ 
the root system. Let $\{H_j, X_\al\}$ be
a Chevalley basis for $\fg$; the existence of such basis is
granted for $\fg$, see \cite{fg}. Let $H_\al=[X_\al,X_{-\al}]$,
this is an integral combination of the $H_j$'s (see \cite{fg}).
\medskip

Assume
\begin{equation}
[X_\al, X_\al ]=0, \qquad \hbox{for all } \al \in \Delta  \label{assumption}
\end{equation}

\begin{theorem} \label{compactlie-thm}
Let $\fg$ be as above and 
let:
$$
\fk:= {\rm span}_\R \{iH_j, \, i(X_\al+X_{-\al}), \,  (X_\al-X_{-\al}), \,
\al \, \hbox{even}, \, (X_\be-iX_{-\be}), \, \be \, \hbox{odd} \}
\, \subset \, \fg
$$
Then $\fk$ is a compact real form of $\fg$, moreover 
$\fk$ consists of the fixed points of the involutory semiautomorphism:
$$
\begin{array}{cccc}
s:& \fg & \lra & \fg \\
& X_\al & \mapsto & -(i)^{p(\al)}X_{-\al} \\
& H_j & \mapsto & -H_j
\end{array}
$$
\end{theorem}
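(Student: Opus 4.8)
The plan is to verify the three assertions in turn: that $s$ is a well-defined involutory semiautomorphism, that its fixed-point set is exactly $\fk$, and that $\fk$ is a compact real form. First I would check that $s$ extends to a conjugate-linear map $\fg\to\fg$ respecting the bracket up to the appropriate sign, i.e. $s$ is a \emph{semimorphism}: $s(\lambda x)=\bar\lambda\, s(x)$ and $s([x,y])=[s(x),s(y)]$. On the Chevalley basis this is a finite check using the structure constants; the key point is that the defining relations of a Chevalley basis (in particular $[H_j,X_\al]$, $[X_\al,X_{-\al}]=H_\al$, and the $[X_\al,X_\be]=\pm N_{\al\be}X_{\al+\be}$ relations) are preserved under $\al\mapsto-\al$ on indices together with the overall sign $X_\al\mapsto -X_{-\al}$, $H_j\mapsto -H_j$. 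Assumption \eqref{assumption} that $[X_\al,X_\al]=0$ for all $\al$ (including odd $\al$) is what makes $s$ compatible with the super-bracket on the odd root vectors, since otherwise $s$ applied to $[X_\al,X_\al]$ would have to be consistent with $[-X_{-\al},-X_{-\al}]=[X_{-\al},X_{-\al}]$. Then $s^2$ fixes every basis element, so $s^2=\id$ and $s$ is involutory.

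Next I would identify the fixed-point set $\fg^s$. Since $s$ is conjugate-linear and involutory, $\fg$ decomposes over $\R$ as $\fg^s\oplus i\fg^s$ (the $+1$ and $-1$ real eigenspaces), so it suffices to exhibit an $\R$-spanning set of $\fg^s$. One computes directly: $s(iH_j)=-i\,s(H_j)=-i(-H_j)=iH_j$; $s\bigl(i(X_\al+X_{-\al})\bigr)=-i\bigl(-X_{-\al}-X_\al\bigr)=i(X_\al+X_{-\al})$; and $s\bigl(X_\al-X_{-\al}\bigr)=-X_{-\al}+X_\al=X_\al-X_{-\al}$. Hence all the listed generators lie in $\fg^s$, and since they span $\fg$ over $\C$ they span $\fg^s$ over $\R$; thus $\fg^s=\fk$. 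That $\fk$ is closed under the bracket follows because $s$ is a Lie-superalgebra semiautomorphism, so its real fixed points form a real sub-superalgebra; alternatively one checks bracket closure on generators. The condition $\C\otimes_\R\fk=\fg$ is immediate from the spanning statement.

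Finally, the compactness of $\fk_0$. Here $\fk_0={\rm span}_\R\{iH_j,\ i(X_\al+X_{-\al}),\ (X_\al-X_{-\al})\}$ ranging over \emph{even} roots $\al$, which is precisely the classical Weyl construction of the compact real form of the reductive Lie algebra $\fg_0$ (or of each simple summand), together with the center; so $\fk_0$ is a compact real form of $\fg_0$ in the ordinary sense, and $\Ad(\fk_0)$ is compact. The one genuine subtlety—and the step I expect to be the main obstacle—is making sure the "Chevalley basis" relations for classical Lie superalgebras genuinely behave like the ordinary ones under the substitution $X_\al\mapsto -X_{-\al}$, including the possibility of isotropic odd roots where $[X_\al,X_\al]=0$ is forced by \eqref{assumption} rather than automatic, and checking that $s$ does not clash with any sign conventions $N_{-\al,-\be}=-N_{\al\be}$ needed for $s$ to be bracket-preserving; this is where I would lean on the explicit construction of the super Chevalley basis in \cite{fg}. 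Once those structure-constant identities are in hand, the whole argument is the standard Weyl unitary-trick computation carried out compatibly with the $\Z_2$-grading.
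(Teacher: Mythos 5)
Your proposal is correct and follows essentially the same route as the paper, whose entire proof is the one-line remark that $\fk$ is a real subalgebra fixed by $s$ and is a compact real form ``by the ordinary theory.'' You simply make explicit what the paper leaves implicit: the fixed-point computation $s(iH_j)=iH_j$, $s(i(X_\al+X_{-\al}))=i(X_\al+X_{-\al})$, $s(X_\al-X_{-\al})=X_\al-X_{-\al}$, the appeal to Weyl's construction for the compactness of $\fk_0$, and the (rightly flagged, and also deferred by the paper to the Chevalley-basis conventions of \cite{fg} and hypothesis (\ref{assumption})) verification that $s$ is compatible with the super-bracket on the odd root vectors.
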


\begin{proof}
$\fk$ is a real subalgebra of $\fg$, it is fixed by $s$ and
it is a compact real form of $\fg$, by the ordinary theory.
\end{proof}

\begin{observation} \label{osp-counterex}
If $[X_\al,X_\al] \neq 0$, one can check right away that
$\fk$ written above is not closed for the bracket, so
the semiautomorphism $s$ makes no sense in this context.
Root vectors $X_\al$ with such property exist for example for $\rosp(V)$.
In this case there is no compact real form of $\fg$. In fact
let us look at $\rosp(1|2)$. The even part  
$\rosp(1|2)_0=\rsl_2(\C)$ acts on the odd part:
$\rosp(1|2)_1=\C^2 \oplus (\C^2)^*$, via the standard and contragredient
representation of  $\rsl_2(\C)$.
The compact real form of $\rsl_2(\C)$ is $\rsu(2)$, which
does not admit even dimensional real representations that complexified
give the $\rosp(1|2)_0$-module $\rosp(1|2)_1$.
This example shows that our hypothesis $[X_\al,X_\al] \neq 0$ leaves out
some superalgebras that do not admit compact real forms, among which 
the families of Lie superalgebras of classical type: $B$, $C$, $D$.
A similar argument excludes also the strange superalgebra $Q(2)$ and
type $P$.
\end{observation}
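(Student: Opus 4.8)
The plan is to argue by contradiction, reducing the existence of a compact real form to a question about real structures on the odd part and then obstructing it by Frobenius--Schur type. Suppose $\fg=\rosp(1|2)$ admits a compact real form $\fk=\fk_0\oplus\fk_1$. Since $\C\otimes_\R\fk=\fg$ respects the $\Z_2$-grading, we get $\C\otimes_\R\fk_0=\fg_0=\rsl_2(\C)$ and $\C\otimes_\R\fk_1=\fg_1$, with $\fk_0$ a compact real form of $\rsl_2(\C)$; by the ordinary theory $\fk_0\cong\rsu(2)$. Because $[\fk_0,\fk_1]\subseteq\fk_1$, the space $\fk_1$ is a real representation of $\fk_0\cong\rsu(2)$ whose complexification is $\fg_1$ as an $\rsl_2(\C)$-module. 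Hence a compact real form of $\rosp(1|2)$ exists if and only if $\fg_1$ carries an invariant real structure over $\rsu(2)$, i.e. a conjugate-linear $\rsu(2)$-equivariant involution $\sigma_1$ with $\sigma_1^2=\id$.

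The core step is to show no such $\sigma_1$ exists. Here I would use that the odd part of $\rosp(1|2)$ is the standard two-dimensional module $\C^2$ of $\rsl_2(\C)$ (the symplectic block of the natural module $\C^{1|2}$; note it is self-dual, so the standard and contragredient copies coincide). Restricted to the compact form $\rsu(2)$ this is the textbook pseudo-real (quaternionic) representation: it admits an invariant quaternionic structure $J$ with $J^2=-\id$ but no invariant real structure, equivalently its Frobenius--Schur indicator is $-1$. Therefore $\C^2$ does not arise as the complexification of any real $\rsu(2)$-module of the same dimension; the smallest real $\rsu(2)$-module meeting it has real dimension $4$ and complexifies to $\C^2\oplus\C^2$, not to $\C^2$. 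This contradicts the reduction of the first paragraph, so $\rosp(1|2)$ has no compact real form. The opening assertion of the observation I would dispatch directly: for a non-isotropic odd root $\al$ one computes $[X_\al-X_{-\al},\,X_\al-X_{-\al}]=[X_\al,X_\al]+[X_{-\al},X_{-\al}]-2[X_\al,X_{-\al}]$, and the term $[X_\al,X_\al]\ne0$ forces the result outside $\fk_0$, so the span defining $\fk$ in Theorem \ref{compactlie-thm} fails to close.

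For the families $B(m,n)$ and $D(m,n)$ I would run the identical scheme. The odd part is $\C^{\,r}\otimes\C^{\,2s}$, the tensor product of the defining $\so_r(\C)$-module, of real type over the compact form $\so(r)$, with the defining $\rsp_{2s}(\C)$-module, of quaternionic type over the compact symplectic form. Since the Frobenius--Schur type is multiplicative, real $\otimes$ quaternionic is quaternionic, so $\fg_1$ again admits no invariant real structure over the compact even part and no compact form exists.

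I expect the genuinely delicate cases to be $C(n)$ and the strange superalgebra $Q(2)$, and this is where the main obstacle lies. There the odd module need not obstruct a real structure by itself: for $Q(2)$ it is the adjoint module of $\rsl_2$, which is of real type, so the contradiction cannot be read off from the module alone. For these I would instead use the odd bracket: a compact conjugation must satisfy $\sigma_0([X,Y])=[\sigma_1 X,\sigma_1 Y]$ for the symmetric $\fk_0$-equivariant squaring map $S^2\fk_1\to\fk_0$, and compatibility of this map with the positive-definite invariant form on the compact $\fk_0$ forces a sign condition that fails precisely because a non-isotropic odd root contributes $[X_\al,X_\al]\ne0$. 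Making this last compatibility argument precise and uniform across $C(n)$ and $Q(2)$ is the step I expect to demand the most care.
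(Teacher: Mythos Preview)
Your approach is essentially the same as the paper's: both reduce the nonexistence of a compact form of $\rosp(1|2)$ to the obstruction that the odd part, viewed as an $\rsu(2)$-module, admits no invariant real structure. Your treatment is in fact more precise than the paper's on two points. First, you correctly identify $\rosp(1|2)_1$ as the single two-dimensional standard module $\C^2$ (the paper writes $\C^2\oplus(\C^2)^*$, which would be four-dimensional; since $\C^2$ is self-dual this is at best redundant notation, and read literally the four-dimensional module $\C^2\oplus\C^2$ \emph{does} admit a real structure, so the argument as written there would not go through). Second, you make the quaternionic/Frobenius--Schur mechanism explicit and extend it multiplicatively to $B(m,n)$ and $D(m,n)$, where the paper simply asserts that ``this example shows'' those families are excluded. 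Your honest flag that $C(n)$ and $Q(2)$ require the odd bracket rather than the module alone is also sharper than the paper's ``a similar argument excludes $Q(2)$''; the paper gives no further detail there either, so you are not missing anything the paper supplies.
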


The existence of a compact real form $\fk$ of $\fg$ under
our hypotheses allows us to
proceed into the construction of the corresponding supergroup $K$,
which will be compact, since its underlying topological space is compact
and maximal, in the sense that the complexification of its
Lie superalgebra coincides with $\fg$. 
We shall do it in two different ways: via an algebraic method 
using the Chevalley supergroups theory 
and via the Super Harish Chandra Pairs theory.   
In the end, we will give a comparison between
these two different, but equivalent approaches.

\medskip

\section{Compact subgroups} \label{compact}

Let $G$ be a complex analytic Lie supergroup, with Lie superalgebra $\fg$
of classical type, subject to hypothesis (\ref{assumption}).
We want to introduce  $G^r$ the real supergroup
underlying $G$. The question of real
forms is treated in \cite{dm} with different language, but we shall neverthless
be inspired by the same philosophy.

\medskip

As before, let $G$ denote both the complex analytic supergroup and the unique
algebraic supergroup associated with it (see sec. \ref{analytic-algebraic}).

\medskip

The functor of points of the complex affine algebraic supergroup $G$
associates a group to each object in $\salg_\C$, the category of complex
commutative superalgebras: 
$$
G: \salg_\C \lra \grps, \qquad A \mapsto G(A)=\Hom(\cO(G), A)
$$
where $\cO(G)$ is the superalgebra of global sections of 
the structural sheaf of $G$.
Since all the supergroups we are interested in are subgroups of $\rGL(V)$
for a suitable superspace $V$, the functor $G$ is automatically defined
on the morphisms.
Similarly the functor of points of a real affine algebraic supergroup  
associates to any {\sl real} commutative superalgebra a group.
Now we want to understand what is the real algebraic supergroup $G^r$
{\sl underlying} $G$. 

\begin{definition} 
Define the following functor:
$$
G^r: \salg_\R \lra \grps, \qquad A \mapsto 
G^r(A)=\Hom_{\salg_\C}(\cO(G), A \otimes_\R \C )
$$
where $\salg_\R$ is the category of commutative $\R$-superalgebras.
We say that $G^r$ is the 
\textit{real algebraic supergroup underlying the complex
supergroup $G$}. 
%$G^r$ is representable and its representing 
%real Hopf superalgebra
%can be constructed from the complex superalgebra $\cO(G)$.
\end{definition}

%For example when we consider the complex ordinary group $G=\rSL_2$,
%we have that $G(\C)=\rSL_2(\C)$, $G(\R)=\rSL_2(\R)$
%while $G^r(\R)=\rSL_2(\C)$. 
%%Notice that as real superalgebra $\cO(G^r)$
%%has twice the generators of $\cO(G)$ and the relations can be
%%easily worked out from the relations among the generators in $\cO(G)$.

\medskip

In \cite{fg} and \cite{ccf} ch. 7,
we showed that the homogeneous one-parameter subgroups of $G$ 
come in three different
fashions.
Let $X_\al$, $X_\be$, $X_\ga$ be root vectors in the Chevalley basis,
$\al \in \Delta_0$, $\be$, $\ga$ $\in \Delta_1$, with
$[X_\be,X_\be]=0$, $[X_\ga,X_\ga] \neq 0$.
We define 
the following supergroup functors from the categories
of complex commutative superalgebras to the category of sets:
$$  
\begin{array}{rl}
x_\alpha(A) \!  &  := \big\{ x_\alpha(t)
:=\exp\!\big( t \, X_\alpha \big) \;\big|
\; t \in A_0 \,\big\} \; = \\ \\
& =\big\{ \big( 1 + t \, X_\alpha + 
t^2 \, {X_\alpha^2 \over 2}+ \cdots \big) \;\big| \; t \in A_0 \,\big\} 
\subset \rGL(V)(A), 
\\ \\ 
 x_\beta(A) \!  &  :=   
\big\{ x_\be(\theta):=\exp\!\big( \vartheta \, X_\beta \big)
\;\big|\; \vartheta \in A_1 \,\big\}  \; \\ \\
& =\big\{ \big( 1 +  \vartheta \, X_\beta \big) 
\;\big|\; \vartheta \in A_1 \,\big\} \subset  \rGL(V)(A), 
\\  \\
x_\gamma(A) \!  &  := \,  
\big\{x_\gamma(t,\theta):= \exp\!\big( \vartheta \, X_\gamma + 
t \, X_\gamma^{\,2} \big) \;\big|\; \vartheta \in A_1 \, , 
\, t \in A_0 \,\big\}  \, =  \\ \\
\phantom{\bigg|}  &  \phantom{:}= 
\,  \big\{ \big( 1+\vartheta \, X_\gamma\big) 
\exp\!\big( t \, X_\gamma^{\,2} \big) \;\big|\; 
\vartheta \in A_1 \, , \, t \in A_0 \,\big\}\subset  \rGL(V)(A). 
\end{array}  
$$
%In Chevalley's recipe, $V$ is a fixed complex 
%representation of $\fg$. 
These functors are all representable and their representing Hopf
superalgebras are respectively: $\C[T]$, $\C[\Theta]$, $\C[T,\Theta]$.
The Hopf structure of the first two is trivial, while for the
last one see \cite{fg}. Notice that $x_\gamma$ will not appear in our $G$,
since our given hypothesis (\ref{assumption}).

\medskip

We want to understand what are the real
supergroups underlying $x_\al$,  $x_\beta$. % and $x_\gamma$.
Let us consider
the case $\be$ odd, $[X_\be,X_\be] = 0$ (the case $x_\al$, $\al$ even
is the same).
$$
x_\be^r: \salg_\R \lra \sets, \qquad
x_\be^r(A)=%x_\be(A \oplus iA)=
\Hom(\C[\Theta], A \otimes_\R \C)
$$
Hence an element in $x_\be^r(A)$ 
is a morphism:
$$
\C[\Theta] \lra A \otimes_\R \C, \qquad 
\Theta \mapsto \theta:=\theta_0 \otimes 1 +  \theta_1 \otimes i
$$
We denote such morphism $x_\be^r(\theta)$ to stress the fact
that it is determined once we choose $\theta$ a
pair of elements $\theta_0, \theta_1 \in A_1$ the
odd part of the real commutative superalgebra $A$.
It makes then perfect sense to consider the element
$x_\be^r(\btheta)$:
$$
\C[\Theta] \lra A \otimes_\R \C, \qquad \Theta \mapsto 
\btheta:=\theta_0 \otimes 1 -  \theta_1 \otimes i
$$
where $\btheta$ is now associated to the pair of odd elements 
$(\theta_0,-\theta_1)$.

\medskip
We are ready for one of our main definitions.

\begin{definition} \label{sigmadef}
Let the notation be as above.
Define the following natural involution:
$$
\begin{array}{ccccc}
\sigma_A:& G^r(A)& \lra &G^r(A) & \\ \\
%& x_\al(t)& \mapsto & x_{-\al}(-\bt) \\ \\
& x_\be(\theta)& \mapsto & x_{-\be}(-i\btheta) & \\ \\
%& x_\gamma(t, \theta)& \mapsto & x_{-\al}(-\bt, -\btheta) \\ \\
& g & \mapsto & \sigma^0_A(g), & g \in G_0^r(A)
\end{array}
$$
where $\sigma^0:G_0^r \lra G_0^r$ is the involution of the
reduced group $G_0^r$ detailed in \cite{St}. 
\end{definition}
%Wait for the definition of K.

%We define the \textit{compact supergroup functor} associated to $G$ as
%the following supergroup functor:
%$$
%K:\salg_\R \lra \grps, \qquad K(A)=G^r(A)^\sigma
%$$
%where $G^r(A)^\sigma$ denotes the elements of $G^r(A)$ fixed by $\sigma$.
%It is not clear a priori that this functor is representable, however
%once this is established, we have immediately
%from the ordinary theory that $|K|$ is
%compact. 

\begin{remark}
The ordinary involution $\sigma^0_A$ of $G_0^r(A)$ is such that
$\sigma^0_A( x_\al(t))=x_{-\al}(-\bt)$, for $\al$ an even root and
$\sigma^0_A( h(t) )=h(\bt^{-1})$ for $h$ a toral element (see \cite{St}). 
We cannot however define $\sigma^0_A$ by just specifying the images of
$x_\al(t)$,  $h(t)$ only, since these elements generate $G_0^r(A)$
only for $A$ {\sl local} (see \cite{sga3}). 
The existence of such $\sigma^0$ is however
granted by the ordinary theory. We do not incur into the same problem
in the super setting, since $G^r(A)$ is indeed generated by $G_0^r(A)$ and
$x_\be(A)$ for all superalgebras $A$ (see \cite{fg}).   
\end{remark}

We now wish to relate the two involutions $\sigma_A$ 
on $G^r(A)$ (see Def. \ref{sigmadef})
and $s$ on $\fg$ (see Theorem \ref{compactlie-thm}) and show they correspond
the same (semi) automorphism of $\cO(G)$, the Hopf superalgebra representing 
the supergroup $G$ with Lie superalgebra $\fg$.
We are deeply indebted to our referee for his suggestions regarding the next
observation.

\begin{observation}  \label{inv-obs}
Let $V$ be a complex super vector space (resp. a Lie superalgebra or
an Hopf superalgebra). We define ${\overline{V}}$ as the complex super vector
space where $c, v \mapsto cv$ is replaced by $c,v \mapsto {\bar c} v$
for $c \in \C$ (and similarly in the case of $V$ a Lie or Hopf superalgebra). 
Giving a $\C$-semilinear involution on $V$ is the same as
giving a $\C$-linear isomorphism $f:V \lra \overline{V}$,
such that $\overline{f} \circ f = {\mathrm {id}}_V$.

\smallskip

Consider now the $\C$-semilinear involution $s: \fg \lra \fg$ as in
\ref{compactlie-thm}. Such an involution corresponds to a $\C$-linear
isomorphism ${\hat s}:{\fg} \lra \overline{\fg}$, which extends
uniquely to give an isomorphism $\cU(\hat s):\cU(\fg) \lra  
\cU(\overline{\fg})$.
We wish to show that
such ${\hat s}$ corresponds to a unique Hopf superalgebra isomorphism
$\sigma':\overline{\cO(G)} \lra \cO(G)$ (equivalently to the
$\C$-semilinear involution $\sigma':\cO(G) \lra \cO(G)$)
inducing such ${\hat s}$ on the 
Lie superalgebras.

\smallskip

We need to go back briefly to the construction of Chevalley supergroups
(Ref. \cite{fg}). By our hypothesis $G$ is constructed via the Chevalley
supergroups recipe, in other words $G=G_V$, where $\rho:\fg \lra \rEnd_\C(V)$
is a faithful representation of $\fg$ into some complex super vector space
$V$. In the above notation, we can define immediately 
$\overline{\rho}: \overline{\fg} \lra
\overline{\rEnd_\C(V)}=\rEnd_\C(\overline{V})$, 
hence we obtain another algebraic supergroup
$G_{\overline{V}}$, through the Chevalley supergroup construction.
Since all of our objects are defined over the reals, we have that
$G_{\overline{V}}$ is represented by the Hopf superalgebra $\overline{\cO(G)}$.
So, in order to obtain the desired 
Hopf superalgebra isomorphism
$\sigma':\overline{\cO(G)} \lra \cO(G)$, it is enough that we give
an isomorphism $G_V \cong G_{\overline{V}}$. For this we
consider the faithful representation 
$\rho \cdot \hat s:\fg \lra \rEnd_\C({\overline{V}})$. Both $\rho$ and
$\rho \cdot \hat s$ have the same weights (up to sign), hence
according to the recipe, they will give an isomorphism 
$\psi:G_V\stackrel{\sim}\lra  G_{\overline{V}}$, ($\psi=\uspec(\sigma')$).

\smallskip

This implies the isomorphism 
$$
\psi_A:G_V(A \otimes_\R \C) \stackrel{\sim}\lra  
G_{\overline{V}}(A \otimes_\R \C), \qquad \forall A \in \salg_\R
$$ 
%Under such isomorphism, we have that
%for $A \in \salg_\R$, $G_V(A \otimes_\R \C) \cong 
%G_{\overline{V}}(A \otimes_\R \C)$.
%This isomorphism composed with the isomorphism
%$G_{\overline{V}}(A \otimes_\R \C) \cong G_{{V}}(A \otimes_\R \C)$
%given by the base extension via the complex conjugate, is the  involution
Consider now the isomorphism $c:G_{\overline{V}}(R) \cong G_{{V}}(\overline{R})$ 
obtained via the complex conjugation. %that is:
%$c_R(g)=\overline{g}$, for all $R \in \salg_\C$.   
Then we have:
%$\sigma_A$ as in \ref{sigmadef}:
$$
\sigma_A:G_V(A \otimes_\R \C) \, \stackrel{\uspec(\sigma')} 
\lra G_{\overline{V}}(A \otimes_\R \C) \,
\stackrel{c}\lra \, G_{{V}}(A \otimes_\R \C), \qquad A \in \salg_\R
$$
(since $A \in \salg_\R$ we have $\overline{A \otimes_\R \C}=A \otimes_\R \C$).
Notice that in our context $G^r(A)=G_V(A \otimes_\R \C)=\Hom_{\salg_\C}(\cO(G), 
A \otimes \C)$.
 
\smallskip

We now come to the uniqueness of $\sigma'$. Since $G$ is connected, we have
that $\cO(G)$ is naturally embedded into the Hopf dual $\cU(\fg)^o$
and similarly $\overline{\cO(G)} \subset \cU(\overline{\fg})^o$. In our
previous discussion we proved the existence of 
$\sigma':\overline{\cO(G)} \lra \cO(G)$ inducing $\hat s$, consequently
such $\sigma'$ must extend to give the morphism 
$\cU(\hat{s})^o :\cU(\fg)^o \lra  \cU(\overline{\fg})^o$ and this proves
the uniqueness.
 
\end{observation}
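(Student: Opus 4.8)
The plan is to reduce the statement to the Chevalley--supergroup dictionary of \cite{fg,fg2}, which turns an isomorphism of Chevalley supergroups into an isomorphism of their representing Hopf superalgebras and conversely. Recalling that a $\C$-semilinear involution of a super vector space (or of a Lie, or Hopf, superalgebra) is the same datum as a $\C$-linear isomorphism onto its conjugate, the involution $s$ of Theorem \ref{compactlie-thm} is encoded by a $\C$-linear isomorphism $\hat s\colon\fg\lra\overline{\fg}$. Since $\fg$, a Chevalley basis, the structure constants and the representation spaces are all defined over $\R$, the Chevalley supergroup $G_{\overline{V}}$ attached to $\overline{\rho}\colon\overline{\fg}\to\rEnd_\C(\overline{V})$ is represented exactly by $\overline{\cO(G)}$. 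Hence, to produce the Hopf superalgebra isomorphism $\sigma'\colon\overline{\cO(G)}\lra\cO(G)$, it is enough to exhibit a supergroup isomorphism $\psi\colon G_V\lra G_{\overline{V}}$ inducing $\hat s$ on Lie superalgebras, and then to transport it to $A$-points.

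\textbf{Existence.} Compose the faithful representation $\rho$ defining $G=G_V$ with $\hat s$ to get a faithful representation $\rho\cdot\hat s\colon\fg\to\rEnd_\C(\overline{V})$. Because $s$ sends $X_\al\mapsto-X_{-\al}$ and $H_j\mapsto-H_j$, the representations $\rho$ and $\rho\cdot\hat s$ have the same weights up to sign, hence the same weight lattice (negation being a lattice automorphism); by the Chevalley supergroup recipe of \cite{fg} (the supergroup depending on the representation only through that lattice, once a Chevalley basis is fixed) this produces an isomorphism $\psi\colon G_V\stackrel{\sim}{\lra}G_{\overline{V}}$ with $\psi=\uspec(\sigma')$, inducing $\hat s$. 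Taking $A$-points and composing with the complex-conjugation isomorphism $c\colon G_{\overline{V}}(R)\cong G_V(\overline{R})$, specialized at $R=A\otimes_\R\C$ with $A\in\salg_\R$ (so $\overline{A\otimes_\R\C}=A\otimes_\R\C$ and $G^r(A)=G_V(A\otimes_\R\C)$), yields a natural involution of $G^r$. To identify it with the $\sigma_A$ of Definition \ref{sigmadef} one checks its effect on generators: on the even one-parameter and toral subgroups it restricts to Steinberg's involution $\sigma^0$ of \cite{St}, and on the odd one-parameter subgroups it is $x_\be(\theta)\mapsto x_{-\be}(-\btheta)$; since $G^r(A)$ is generated by $G^r_0(A)$ together with the $x_\be(A)$ for every $A$ (see \cite{fg}), these values pin the involution down and identify it with $\sigma_A$.

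\textbf{Uniqueness.} Here I would use that $G$ is connected, so that $\cO(G)$ embeds canonically into the Hopf dual $\cU(\fg)^{o}$, and likewise $\overline{\cO(G)}\subset\cU(\overline{\fg})^{o}$. The $\C$-linear $\hat s$ extends uniquely to $\cU(\hat s)\colon\cU(\fg)\to\cU(\overline{\fg})$, whose transpose $\cU(\hat s)^{o}\colon\cU(\fg)^{o}\to\cU(\overline{\fg})^{o}$ restricts to $\cO(G)$. Any Hopf superalgebra morphism $\sigma'\colon\overline{\cO(G)}\to\cO(G)$ inducing $\hat s$ on Lie superalgebras must agree with this restriction, hence is unique; consequently so is $\sigma_A$.

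The step I expect to be the real obstacle is \textbf{existence} --- more precisely, being sure the Chevalley construction of \cite{fg,fg2} actually delivers an isomorphism $G_V\cong G_{\overline{V}}$ out of the mere coincidence of weights up to sign, \emph{including} on the odd part generated by the elements $1+\theta_\al X_\al$, and that this isomorphism is compatible with $\hat s$ on Lie superalgebras; this is where the full force of \cite{fg} is needed. A secondary, essentially bookkeeping, point is the matching of the abstract $\sigma'$ with the concrete $\sigma_A$ of Definition \ref{sigmadef}, which relies on the generation of $G^r(A)$ by $G^r_0(A)$ and the $x_\be(A)$ over arbitrary (not only local) superalgebras and on Steinberg's explicit formulas for $\sigma^0$.
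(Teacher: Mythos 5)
Your proposal follows the paper's own argument essentially step for step: encoding $s$ as $\hat s\colon\fg\to\overline{\fg}$, identifying $G_{\overline{V}}$ with the supergroup represented by $\overline{\cO(G)}$ via real-definedness, obtaining $\psi=\uspec(\sigma')$ from the coincidence of weights up to sign in the Chevalley recipe, composing with complex conjugation to recover $\sigma_A$, and proving uniqueness through the embedding $\cO(G)\subset\cU(\fg)^{o}$ and the extension $\cU(\hat s)^{o}$. The only addition is your explicit check on generators that $c\circ\uspec(\sigma')$ agrees with the $\sigma_A$ of Definition \ref{sigmadef}, which the paper asserts without spelling out; this is a harmless (indeed welcome) elaboration, not a different route.
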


We are ready to define the compact form of our complex supergroup $G$.

\begin{definition} \label{Kdef}
We define the $K$, \textit{compact form} of $G$, as the algebraic supergroup 
represented by the Hopf superalgebra $\cO(G)^{\sigma'}$ (where
$\sigma'$ is as in
Observation \ref{inv-obs}):
$$
K: \salg_\R \lra \sets, \qquad K(A)=\Hom_{\salg_\R}(\cO(G)^{\sigma'},A)
$$
Notice that $\cO(G)^{\sigma'})$ is a real form of $\cO(G)$.
\end{definition}

The next proposition allows us to calculate the functor of
points of the supergroup $K$.

\begin{proposition}
Let the notation be as above. Then $K(A)$ coincides with the
$\sigma_A$ invariants in $G^r(A)$:
$$
K(A)\, = \, G^r(A)^{\sigma_A} \subset G^r(A)
$$
Furthermore $\Lie(K)=\fk$.
\end{proposition}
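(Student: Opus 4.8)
The plan is to prove the two assertions separately, using the functor-of-points description of $K$ from Definition \ref{Kdef} together with the identity $G^r(A) = G_V(A\otimes_\R\C) = \Hom_{\salg_\C}(\cO(G), A\otimes_\R\C)$ recorded in Observation \ref{inv-obs}. First I would unwind the definition: an $A$-point of $K$ is an $\R$-superalgebra morphism $\cO(G)^{\sigma'}\lra A$, and since $\cO(G)^{\sigma'}$ is a real form of $\cO(G)$ (so $\cO(G)^{\sigma'}\otimes_\R\C\cong\cO(G)$, by the last sentence of Definition \ref{Kdef}), such a morphism is the same datum as a $\C$-superalgebra morphism $f:\cO(G)\lra A\otimes_\R\C$ that is \emph{equivariant} for $\sigma'$ on the source and the standard conjugation $1\otimes(\blank)$ on $A\otimes_\R\C$; i.e. $f\circ\sigma' = (\mathrm{id}_A\otimes\bar{\phantom{x}})\circ f$. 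By the chain of isomorphisms $\sigma_A = c\circ\uspec(\sigma')$ displayed in Observation \ref{inv-obs}, this equivariance condition is exactly the statement that $f$, viewed as an element of $G^r(A)$, is fixed by $\sigma_A$. So the containment $K(A) = G^r(A)^{\sigma_A}$ is essentially a formal consequence of Observation \ref{inv-obs} once the bookkeeping of the conjugation $c$ is made explicit; the only real content is checking that "fixed by the semilinear involution on global sections" translates under $\Hom$ into "$\sigma_A$-fixed on points", which is a standard descent-along-$\C/\R$ argument.

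For the second assertion, $\Lie(K)=\fk$, I would use the Lie-algebra functor $K(\blank)$ applied to the dual numbers: $\Lie(K) = \ker\big(K(\R[\epsilon])\to K(\R)\big)$ where $\epsilon^2=0$ is even, together with the analogous description for $G^r$, which gives $\Lie(G^r) = \fg$ regarded as a real Lie superalgebra of real dimension $2\dim_\C\fg$. Under the identification $\Lie(G^r)=\fg_\R$, the differential of $\sigma_A$ at the identity is precisely the $\R$-linear involution underlying the $\C$-semilinear map $s:\fg\lra\fg$ of Theorem \ref{compactlie-thm}: this is the infinitesimal version of the statement, proved in Observation \ref{inv-obs}, that $\sigma'$ induces $\hat s$ on Lie superalgebras. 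Hence $\Lie(K)$, being the $\sigma_A$-fixed points inside $\Lie(G^r)$, is the $s$-fixed subalgebra of $\fg$, which by Theorem \ref{compactlie-thm} is exactly $\fk$. Concretely I would verify on the one-parameter subgroups: for $\al$ even, $\sigma_A(x_\al(t))=x_{-\al}(-\bar t)$ forces the tangent direction $t\mapsto t\,X_\al$ to be replaced by $-\bar t\,X_{-\al}$, so the fixed tangent vectors are spanned by $i\,(X_\al+X_{-\al})$ and $X_\al-X_{-\al}$; the toral relation $\sigma_A(h(t))=h(\bar t^{-1})$ gives the $iH_j$; and for $\be$ odd with $[X_\be,X_\be]=0$, the rule $x_\be(\theta)\mapsto x_{-\be}(-\btheta)$ gives the same pattern on the odd root spaces. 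Matching these against the spanning set in Theorem \ref{compactlie-thm} gives $\Lie(K)=\fk$ on the nose.

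The main obstacle I anticipate is not either assertion in isolation but making the first one genuinely rigorous: one must be careful that taking $\sigma'$-invariants of $\cO(G)$ commutes with $\Hom_{\salg_\R}(\blank, A)$ in the required way, i.e. that the natural map $K(A)\lra G^r(A)^{\sigma_A}$ is not merely injective but \emph{surjective}. Surjectivity amounts to Galois descent for the quadratic extension $\C/\R$ applied to the (possibly infinite-dimensional, but union-of-finite-dimensional) Hopf superalgebra $\cO(G)$: a $\sigma_A$-fixed point of $G^r(A)$ is a conjugation-equivariant map $\cO(G)\to A\otimes_\R\C$, and one must check it factors through $\cO(G)^{\sigma'}\hookrightarrow\cO(G)$ after extending scalars—this uses that $\cO(G) = \cO(G)^{\sigma'}\oplus i\,\cO(G)^{\sigma'}$ as a real vector space, which in turn is where the hypothesis that $\sigma'$ is an \emph{involution} (so its $\pm i$-eigenspace decomposition over $\C$ is the complexification of the real fixed part) is used. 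Once that decomposition is in hand the surjectivity is immediate, and the remaining verifications (that $\cO(G)^{\sigma'}$ inherits the Hopf structure, that $K$ is a supergroup, that $\fk_0$ is of compact type so $K$ deserves to be called "compact") follow from Theorem \ref{compactlie-thm} and the fact established in Observation \ref{inv-obs} that $\sigma'$ is a Hopf superalgebra semiautomorphism.
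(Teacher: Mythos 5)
Your proof of the identity $K(A)=G^r(A)^{\sigma_A}$ is essentially the paper's own argument: both unwind Definition \ref{Kdef}, use that $\cO(G)^{\sigma'}$ is a real form of $\cO(G)$ so that $\R$-points of $\uspec\cO(G)^{\sigma'}$ are conjugation-equivariant $\C$-points of $\uspec\cO(G)$ valued in $A\otimes_\R\C$, and then invoke the factorization $\sigma_A=c\circ\uspec(\sigma')$ from Observation \ref{inv-obs} to identify equivariance with $\sigma_A$-invariance; your explicit insistence on the decomposition $\cO(G)=\cO(G)^{\sigma'}\oplus i\,\cO(G)^{\sigma'}$ for the surjectivity (Galois descent along $\C/\R$) is a welcome precision that the paper leaves implicit. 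Where you genuinely diverge is the second assertion. The paper proves $\Lie(K)=\fk$ by a Hopf-duality argument: $\cO(G)$ embeds in $\cU(\fg)^o$, $\sigma'$ extends to $\cU(\hat s)^o$, and the fixed points downstairs correspond to $\cU(\fg)^{\cU(s)}=\cU(\fk)$. You instead compute the tangent functor directly, identifying $d\sigma_A$ at the identity with $s$ and checking the fixed vectors on the one-parameter subgroups $x_\al(t)$, $h(t)$, $x_\be(\theta)$ against the spanning set of Theorem \ref{compactlie-thm}. Your route is more concrete and makes visible exactly where the basis $\{iH_j,\ i(X_\al+X_{-\al}),\ X_\al-X_{-\al}\}$ comes from, at the cost of a routine verification on each generator; the paper's route is shorter and reuses the uniqueness statement of Observation \ref{inv-obs}. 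One small imprecision to repair: $\ker\big(K(\R[\epsilon])\to K(\R)\big)$ with $\epsilon$ even only recovers $\fk_0$; to capture the odd part you should use the Lie functor $A\mapsto\ker\big(K(A[\epsilon])\to K(A)\big)$ for varying $A\in\salg_\R$ (or adjoin an odd dual variable), which your subsequent computation with odd parameters $\theta$ implicitly does anyway.
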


\begin{proof}
The first statement is a consequence of our definitions. In fact
since $\cO(G)=\cO(G)^\sigma \otimes_\R \C$, 
$$
\Hom_{\salg_\R}(\cO(G)^{\sigma'},A \otimes_\R \C)=G_V(A \otimes_\R \C)=G^r(A)
$$
In Obs. \ref{inv-obs} we established that $\sigma_A=c\circ \uspec(\sigma')$. 
Notice that $\sigma'$ is the identity on $\cO(G)^{\sigma'}$  
and $c$ is the automorphism induced by complex conjugation.
Hence $K(A)\, = \, G^r(A)^{\sigma_A}$.
As for the Lie superalgebras statement, it is enough to notice that
$\cU(s): \cU(\fg) \lra  \cU(\fg)$ corresponds to $\cU(\hat{s})$ as
in Obs. \ref{inv-obs}, the unique extension of $\sigma':\cO(G) \lra \cO(G)$
to $ \cU(\fg)^o$.
Hence the fixed points of $\sigma'$ correspond to  $\cU(\fg)^{\cU(s)}=\cU(\fk)$.
\end{proof}

We now consider few examples.
%; the first two will be crucial
%in the sequel.

\begin{example}
1. $\rSL_2(\C)^r$. Let us write the involution $\sigma$ on the
big cell of $\rSL_2(\C)^r$, that is on $U^-TU^+$ where $U^\pm$ denote
the lower and upper unipotent subgroups, and $T$ the diagonal torus.
Such $\sigma$ will extend uniquely to the whole $\rSL_2(\C)^r$.
The big cell inside $\rSL_2(A)^r:=\rSL_2(\C)^r(A)$ is
$$
\begin{pmatrix} 1 & 0 \\ v & 1 \end{pmatrix}
\begin{pmatrix} t & 0 \\ 0 & t^{-1} \end{pmatrix}
\begin{pmatrix} 1 & u \\ 0 & 1 \end{pmatrix}=
\begin{pmatrix} t & tu \\ vt & utv+t^{-1} \end{pmatrix}
$$
Let's apply $\sigma$:
$$
\begin{array}{c}
\sigma \left( \begin{pmatrix} 1 & 0 \\ v & 1 \end{pmatrix}
\begin{pmatrix} t & 0 \\ 0 & t^{-1} \end{pmatrix}
\begin{pmatrix} 1 & u \\ 0 & 1 \end{pmatrix} \right)
=
\begin{pmatrix} 1 & -\bv \\ 0 & 1 \end{pmatrix} 
\begin{pmatrix} \bt^{-1} & 0 \\ 0 & \bt \end{pmatrix}
\begin{pmatrix} 1 & 0 \\ -\bu & 1 \end{pmatrix}= \\ \\
\begin{pmatrix} \bt^{-1}+\bu\bt\bv & -\bv\bt \\ 
-\bt \bu & \bt \end{pmatrix}
\end{array}
$$
Hence the condition to impose is:
$$
\begin{pmatrix} t & tv \\ tu & utv+t^{-1} \end{pmatrix}=
\begin{pmatrix} \bt^{-1}+\bu\bt\bv & -\bu\bt \\ 
-\bt \bu & \bt \end{pmatrix}
$$
Since any condition on the big cell and local real (super)algebras
will extend uniquely to $\rSL_2(A)^r$ we have:
$$
K(A):=(\rSL_2(A)^r)^\sigma=
\left\{ 
\begin{pmatrix} a & b \\ -\bar b & \bar a 
\end{pmatrix} \, \big| \, |a|^2+|b|^2=1 \right\}
$$
which is $SU(2)$ as expected.

\medskip

2. $\rGL(1|1)$. For $\rGL(1|1)$ we have global
coordinates and we can write the generic element $g$ of $\rGL(1|1)(A)$ as:
$$
g=\begin{pmatrix} 1 & 0 \\ \theta & 1 \end{pmatrix}
\begin{pmatrix} t & 0 \\ 0 & s \end{pmatrix}
\begin{pmatrix} 1 & \eta \\ 0 & 1 \end{pmatrix}=
\begin{pmatrix} t & t\eta \\ t\theta & \theta t \eta + s \end{pmatrix}=
\begin{pmatrix} a & \beta \\ \gamma & d \end{pmatrix}
$$
Applying $\sigma$ we get:
$$
\begin{array}{rl}
\sigma(g) % \left(
%\begin{pmatrix} 1 & 0 \\ \theta & 1 \end{pmatrix} 
%\begin{pmatrix} t & 0 \\ 0 & s \end{pmatrix}
%\begin{pmatrix} 1 & \eta \\ 0 & 1 \end{pmatrix}\right) 
& =
\begin{pmatrix} 1 & -i\btheta \\ 0 & 1 \end{pmatrix}
\begin{pmatrix} \bt^{-1} & 0 \\ 0 & \bs^{-1} \end{pmatrix}
\begin{pmatrix} 1 & 0 \\ -i\etab  & 1 \end{pmatrix} = %\\ \\&
\begin{pmatrix} \bt^{-1}-\btheta \bs^{-1} \etab & 
-i\bs^{-1}\btheta \\ -i\bs^{-1} \etab &  \bs^{-1} \end{pmatrix}
\end{array}
$$
Hence the condition to impose is:
$$
\begin{pmatrix} a & \beta \\ \gamma & d \end{pmatrix}:=
\begin{pmatrix} t & t\eta \\ t\theta & \theta t \eta + t \end{pmatrix}=
\begin{pmatrix} \bt^{-1}-\btheta \bs^{-1} \etab & 
-i\bs^{-1}\btheta \\ -i\bs^{-1} \etab &  \bs^{-1} \end{pmatrix}
$$
which gives after some straightforward calculations:
$$
\begin{array}{rl}
K(A)  = (\rGL(1|1)(A)^r)^\sigma & =
\left\{
\begin{pmatrix} a & \beta \\ \gamma & d \end{pmatrix} 
\, | \, a=\overline{a}^{-1}+\overline{\beta} d^{-1} 
\overline{\gamma} \overline{a}^{-2},  \,
d^{-1} = \overline{d} - \overline{\beta} 
\overline{a}^{-1} \overline{\gamma}, \right.
\\ \\ 
& \left. \beta= - i \overline{a}^{-1} \overline{\gamma} d, \,
\gamma= -id \overline{\beta}  \overline{a}^{-1}  
\right\}
\end{array}
$$

\medskip

3. $\rSL(1|1)$. For $\rSL(1|1)$ we have global
coordinates and we can write the generic element $g$ of $\rSL(1|1)(A)$ as:
$$
g=\begin{pmatrix} 1 & 0 \\ \theta & 1 \end{pmatrix}
\begin{pmatrix} t & 0 \\ 0 & t \end{pmatrix}
\begin{pmatrix} 1 & \eta \\ 0 & 1 \end{pmatrix}=
\begin{pmatrix} t & t\eta \\ t\theta & \theta t \eta + t \end{pmatrix}
$$
Applying $\sigma$ we get:
$$
\begin{array}{rl}
\sigma(g)% \left(
%\begin{pmatrix} 1 & \eta \\ 0 & 1 \end{pmatrix} 
%\begin{pmatrix} t & 0 \\ 0 & t \end{pmatrix}
%\begin{pmatrix} 1 & 0 \\ \theta & 1 \end{pmatrix}\right) & =
=\begin{pmatrix} 1 & -i\btheta \\ 0 & 1 \end{pmatrix}
\begin{pmatrix} \bt^{-1} & 0 \\ 0 & \bt^{-1} \end{pmatrix}
\begin{pmatrix} 1 & 0 \\ -i\etab  & 1 \end{pmatrix} %= \\ \\&
=\begin{pmatrix} \bt^{-1}-\btheta \bt^{-1} \etab & 
-i\bt^{-1}\btheta \\ -i\bt^{-1} \etab &  \bt^{-1} \end{pmatrix}
\end{array}
$$
Hence the condition to impose is:
$$
\begin{pmatrix} a & \beta \\ \gamma & d \end{pmatrix}:=
\begin{pmatrix} t & t\eta \\ t\theta & \theta t \eta + t \end{pmatrix}=
\begin{pmatrix} \bt^{-1}-\btheta \bt \etab & 
-i\bt^{-1}\btheta \\ -i\bt^{-1} \etab &  \bt^{-1} \end{pmatrix}
$$
which gives in terms of the entries $a$, $d$, $\beta$, $\gamma$:
$$
K(A) = (\rSL(1|1)(A)^r)^\sigma=:SU(1|1)=
\left\{
\begin{pmatrix} a & \beta \\ -{\bar \beta} & {\bar a}^{-1} \end{pmatrix} 
\, | \, {\bar a}a(1+i\beta{\bar \beta})=1 \, \right\}
$$
Notice that the reduced part is a torus as one expects.

\medskip

4. $\rSL(m|n)$. The calculation resembles very much (2), this
time the letters are matrices.
$$
\begin{array}{rl}
\sigma \left(
\begin{pmatrix} 1 & 0 \\ \theta & 1 \end{pmatrix} 
\begin{pmatrix} t & 0 \\ 0 & s \end{pmatrix}
\begin{pmatrix} 1 & \eta \\ 0 & 1 \end{pmatrix}\right) & =
\begin{pmatrix} 1 & -i\btheta^t \\ 0 & 1 \end{pmatrix}
\begin{pmatrix} (\bt^{-1})^t & 0 \\ 0 & (\bs^{-1})^t \end{pmatrix}
\begin{pmatrix} 1 & 0 \\ -i\etab^t  & 1 \end{pmatrix} = \\ \\
&=\begin{pmatrix} (\bt^{-1})^t-\btheta^t (\bs^{-1})^t \etab^t & 
-i\btheta^t (\bs^{-1})^t \\ -i(\bs^{-1})^t \etab^t &  (\bs^{-1})^t \end{pmatrix}
\end{array}
$$
The condition to impose is: 
$$
\begin{pmatrix} a & \beta \\ \gamma & d \end{pmatrix}:=
\begin{pmatrix} t & t\eta \\ \theta t & \theta t \eta + s \end{pmatrix}=
\begin{pmatrix} (\bt^{-1})^t-\btheta^t (\bs^{-1})^t \etab^t & 
-i\btheta^t (\bs^{-1})^t \\ -i(\bs^{-1})^t \etab^t &  (\bs^{-1})^t \end{pmatrix}
$$
which gives:
$$
K(A) = (\rSL(m|n)(A)^r)^\sigma=:SU(m|n)=
\left\{ \begin{pmatrix} a & \beta \\ \gamma & d \end{pmatrix} \right\}
$$
with
$$
\begin{array}{cc}
a=({\bar a}^{-1})^t- (\overline{a}^{-1})^t\overline{\gamma}^t
d\overline{\beta}^t (\overline{a}^{-1})^t, & 
\beta= - i({\bar a}^{-1})^t {\bar \gamma}^t d \\ \\
\gamma= - id {\bar \beta}^t  ({\bar a}^{-1})^t &
%d^{-1}={\bar a}^t+ {\bar \beta}({\bar a}^{-1})^t {\bar \gamma}^t 
d=({\bar d}^{-1})^t-\gamma a^{-1} \beta
\end{array}
$$
%The supergroup $SU(m|n)$ has the prescribed Lie superalgebra
%as in \cite{vsv} pg 111:
%$$
%\rsu(m|n)_\pm\,=\,
%\left\{
%\left( \begin{array}{cc} A & B \\ C & D \end{array} \right) \, \Big| \,
%A=-\overline{A}^t, \, D=-\overline{D}^t, \, B= \pm i\overline{C}^t \right\}
%$$
\end{example}

\section{The SHCP's approach to compact forms}

We recall that we have a SHCP when we are given a pair $(G_0, \fg)$
consisting of an ordinary Lie group and a Lie superalgebra, such
that $\Lie(G_0)=\fg_0$ and $G_0$ acts on $\fg$ in such a way 
that the differential of the action is the Lie superbracket in $\fg$
(Ref. \cite{kos}).
The category of SHCP's is equivalent to the category of super Lie groups.
In \cite{cf} such equivalence is extended to the categories of
analytic and algebraic supergroups, with the obvious changes in the
definition of SHCP (see also \cite{mas} for a more comprehensive
treatment of this equivalence).

\medskip

If $G$ is a complex (analytic or algebraic) supergroup with Lie
superalgebra of classical type and
with $G_0$ the reduced underlying ordinary
group, $(G_0, \fg)$ is a SHCP, where $\fg=\Lie(G)$.

\medskip

We now turn to our setting. 
Let $G$ be a complex analytic Lie supergroup, with Lie superalgebra $\fg$
of classical type, subject to hypothesis (\ref{assumption}).
Let $K_0 \subset G_0$ be the maximal compact Lie subgroup of $G_0$
and $\fk$ as in \ref{defcompactliealg}.
Then $K_{shcp}=(K_0, \fk)$ is a SHCP. In fact by the ordinary
theory we have that $\fk_0=\Lie(K_0)$, moreover the natural
adjoint action of $K_0$ on $\fk$ obtained by restricting the
adjoint action of $G_0$ on $\fg$ induces the bracket. 

\begin{definition}
Let the notation and assumptions be as above. 
We define $K_{shcp}=(K_0, \fk)$ the \textit{shcp maximal
compact} in $G$.% according to the SHCP's theory.
\end{definition}

The supergroup $K_{shcp}$ corresponds, via the above
mentioned equivalence of categories, to the maximal compact 
subgroup $K$ as defined in the previous sections. In fact they both
have the same underlying Lie group $K_0$, the same Lie superalgebra
$\fk$ and $K_0$ acts on $\fk$ in the same way in both cases. 
We now want to give a more stringent characterization of the
superalgebra of the global sections on $K$ so to establish 
a link between the SHCP's
and the functor of points approach detailed in the previous section.

\medskip

We recall that if $(G_0, \fg)$ is a SHCP, we can obtain the
global sections of the corresponding algebraic supergroup $G$ as follows:
$$
\cO(G_0, \fg)=
\Hom_{\cU(\fg_0)}(\cU(\fg), \cO(G_0))
$$
(for more details on these definitions see \cite{ccf} ch. 7, sec. 7.4).
The equivalence of the category of SHCP and algebraic supergroups
prescribes that $\cO(G_0, \fg) \cong \cO(G)$ through the isomorphism:
$$ 
\begin{array}{ccc}
\cO(G) & \lra &  \cO(G_0, \fg) \\ \\
s & \mapsto & X \mapsto (-1)^{|X|}|D_Xs| 
\end{array}
$$
where $D_X$ is the left invariant differential operator associated with
$X \in \cU(\fg)$.

\medskip

Let $s$ be the involution of the complex simple Lie
superalgebra $\fg$ (as in \ref{compactlie-thm})
such that $\fg^s=\fk$. 
The involution $s$ induces an involution on the universal
enveloping algebra (still denoted with $s$)
and we have $\cU(\fg)^{s} = \cU(\fk)$. 
%To ease the notation we shall write $s$ for $s_\cU$ also.

\medskip

We define the following subalgebra
of $\cO(G_0, \fg)$:
$$
\begin{array}{rl}
\cO(G_0, \fg)^{s,\sigma_0} =\{f : \cU(\fg) \lra \cO(G_0)  |  
f \cdot s=f, \, \sigma_0 \cdot f(u)=f(u) \} %\, \forall u \in \cU(\fg) \}
%\\ \\
%& 
\subset \cO(G_0, \fg)
\end{array}
$$

\begin{proposition}
Let the notation be as above. Then:
$$
\cO(K_0,\fk)=\cO(G_0, \fg)^{s,\sigma_0}.
$$
\end{proposition}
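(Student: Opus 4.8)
The plan is to match the two descriptions of the global sections of the compact supergroup, using the isomorphism $\cO(G)\cong\cO(G_0,\fg)$ recalled just above as a bridge. Concretely, I would show that under this isomorphism the subalgebra $\cO(G)^{\sigma'}$ of Definition \ref{Kdef} corresponds exactly to the subalgebra $\cO(G_0,\fg)^{s,\sigma_0}$, and then invoke the earlier Proposition identifying $\cO(K_0,\fk)$ with $\cO(G)^{\sigma'}$ via $K(A)=G^r(A)^{\sigma_A}$. So the whole task reduces to a compatibility check: the semiautomorphism $\sigma'$ of $\cO(G)$ gets transported, under $s\mapsto(X\mapsto(-1)^{|X|}\lfloor D_X s\rfloor)$, precisely to the pair of conditions $f\cdot s=f$ and $\sigma_0\cdot f(u)=f(u)$ defining $\cO(G_0,\fg)^{s,\sigma_0}$.

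First I would unwind what $\sigma'$ does on the $\cU(\fg)^o$-side. By Observation \ref{inv-obs}, $\cO(G)$ sits inside $\cU(\fg)^o$ (since $G$ is connected) and $\sigma'$ is the restriction of $\cU(\hat s)^o$, the transpose of the extension $\cU(\hat s):\cU(\fg)\to\cU(\overline\fg)$ of the semilinear involution $s$. Thus for $f\in\cO(G_0,\fg)$, viewed as a functional on $\cU(\fg)$ with values in $\cO(G_0)$, the condition of being $\sigma'$-fixed splits into two separate pieces: first, $f$ must be invariant under precomposition with the algebra involution $s$ on $\cU(\fg)$, i.e. $f\cdot s=f$ — this accounts for the $\fg$-part of the Harish-Chandra datum; second, the values $f(u)\in\cO(G_0)$ must themselves be fixed by the induced involution $\sigma_0$ on $\cO(G_0)$ — this accounts for the $G_0$-part. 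The content here is that the Hopf-dual pairing intertwines the semilinear involution on $\cO(G)$ with the pair $(s$ on $\cU(\fg)$, $\sigma_0$ on $\cO(G_0))$; this is essentially a formal consequence of the way $D_X$ is built from left-invariant differential operators together with the fact that $\sigma^0$ restricts to $K_0$ and $s$ restricts to $\fk$ by Theorem \ref{compactlie-thm}, which must be checked compatibly with the $\cU(\fg_0)$-module structure used to form $\Hom_{\cU(\fg_0)}(\cU(\fg),\cO(G_0))$.

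Once that compatibility is in place, I would argue the reverse inclusion the same way, noting that $\sigma'$ is an involution, so its fixed subalgebra is cut out by exactly the displayed conditions — there is no room for a strictly larger or smaller invariant subalgebra. Finally I would observe that $\cU(\fg)^s=\cU(\fk)$ (stated in the text) guarantees that the resulting object is genuinely the Harish-Chandra algebra $\Hom_{\cU(\fk_0)}(\cU(\fk),\cO(K_0))=\cO(K_0,\fk)$ and not merely an abstract invariant subalgebra: restriction of an $s$-invariant $f:\cU(\fg)\to\cO(G_0)$ with $\sigma_0$-fixed values lands in $\Hom_{\cU(\fk_0)}(\cU(\fk),\cO(K_0))$, and conversely every such function extends uniquely by $s$-invariance.

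\textbf{Main obstacle.} The delicate point is not the bookkeeping of the two conditions but verifying that the Hopf-dual identification $\cO(G)\hookrightarrow\cU(\fg)^o$ genuinely intertwines the geometrically defined semiautomorphism $\sigma'=c\circ\uspec(\sigma')$ (built from the Chevalley recipe and complex conjugation in Observation \ref{inv-obs}) with the algebraically defined involution $s$ on $\cU(\fg)$ — i.e. that the sign $(-1)^{|X|}$ and the passage to $\overline{\cO(G)}$ interact correctly with the $\C$-semilinearity, so that $s$-fixedness on the enveloping algebra really is the shadow of $\sigma'$-fixedness on $\cO(G)$ rather than of $\sigma'$ composed with the antipode or some twist. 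I would handle this by evaluating both sides on a spanning set of $\cU(\fg)$ adapted to the Chevalley basis (monomials in the $H_j$, $X_\alpha$, with the odd $X_\beta$ appearing at most linearly under hypothesis (\ref{assumption})), where $s(X_\alpha)=-X_{-\alpha}$, $s(H_j)=-H_j$ makes the comparison with the matrix-level formula for $\sigma_A$ from Definition \ref{sigmadef} transparent.
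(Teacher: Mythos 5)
Your route is genuinely different from the paper's, and it has a gap at exactly the point you flag as the ``main obstacle.'' The paper's proof never touches $\sigma'$, $\cO(G)$, or the embedding into $\cU(\fg)^o$ at all: it works entirely on the SHCP side, defining $\psi:\cO(G_0,\fg)^{s,\sigma_0}\lra\cO(K_0,\fk)$ by restriction $f\mapsto f|_{\cU(\fk)}$, checking well-definedness from $\sigma_0\cdot f(u)=f(u)$ (so values land in $\cO(K_0)=\cO(G_0)^{\sigma_0}$), and obtaining injectivity and surjectivity from the polarization identity $f(u)=\tfrac12 f(u+s(u))$ together with $\cU(\fg)^{s}=\cU(\fk)$, the inverse map being $g\mapsto\bigl(u\mapsto\tfrac12\,g(u+s(u))\bigr)$. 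This is a short, self-contained argument; you propose to replace it with a transport-of-structure argument through $\cO(G)$.

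The gap is this: a single semilinear involution $\sigma'$ on $\cO(G)$, transported through $\cO(G)\cong\Hom_{\cU(\fg_0)}(\cU(\fg),\cO(G_0))$, produces a \emph{single} fixed-point condition of composite shape, roughly $\sigma_0\circ f\circ s=f$. It does not automatically ``split into two separate pieces'': the subset cut out by the two independent conditions $f\cdot s=f$ and $\sigma_0\cdot f(u)=f(u)$ is a priori strictly smaller than the fixed set of the composite, and showing the two agree --- equivalently, that the relevant functions are determined by their restriction to $\cU(\fk)$ with values in $\cO(K_0)$ --- is not formal bookkeeping; it is the substance of the proposition, and it is precisely what the paper's restriction-plus-averaging argument supplies. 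Your plan defers this to a ``compatibility check'' on Chevalley monomials, so until that check is actually carried out nothing is proved. Secondarily, the ``earlier Proposition identifying $\cO(K_0,\fk)$ with $\cO(G)^{\sigma'}$'' that you invoke does not exist in that form: that identification is the content of the Corollary which the paper \emph{deduces from} this proposition, so your argument leans on a paraphrase of the conclusion; all you may legitimately use is the text's informal assertion that $K$ and $K_{shcp}$ share the pair $(K_0,\fk)$. I would drop the bridge through $\cO(G)$ and prove the statement directly via the restriction map and the decomposition $u=\tfrac12(u+s(u))+\tfrac12(u-s(u))$.
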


\begin{proof}
Define
$$
\begin{array}{cccc}
\psi: & \cO(G_0, \fg)^{s,\sigma_0} & \lra & \cO(K_0,\fk) \\ \\
& f & \mapsto & f|_{\cU(\fk)}
\end{array}
$$
Since $f(u) = \sigma_0 \cdot f(u)$ we have that $\psi$ is well defined, i.e.
$f(u) \in \cO(K_0)= \cO(G_0)^{\sigma_0}$. Furthermore $\psi$ is a superalgebra
morphism. Now the fact $\psi$ is injective. Assume $f|_{\cU(\fk)}=0$. Since
$f \circ s=f$ we have $f(s(u))=f(u)$. $u+s(u)$ is invariant, hence
$f(u+s(u))=2f(u)=0$ for all $u \in \cU(\fg)$. Now the 
surjectivity. Let $g \in \cO(K_0,\fk)$, we want to determine $f \in 
\cO(G_0, \fg)^{s,\sigma_0}$ so that $f|_{\cU(\fk)}=g$. If such $f$ exists, it
must coincide with $g$ on the $s$ invariant elements of $\cU(\fg)$. Hence:
$g(s(u)+u)=f(s(u)+u)=2f(u)$. Hence we define: $f=(1/2)g(s(u)+u)$.
\end{proof}

The next corollary establishes a direct link between the two
approaches to the definition of $K$ maximal compact in $G$.

\begin{corollary}
Let the notation be as above. Then
%Let $G$ be the algebraic supergroup corresponding to
%the analytic SHCP $(G_0, \fg)$ and let $\sigma'$ be the
%involution of $\cO(G)$ induced by $s$. Then
$$
\cO(K)=\cO(G)^{\sigma'} \cong  \cO(G_0, \fg)^{s,\sigma_0} = \cO(K_0,\fk)=\cO(K_{shcp})
$$
in other words we have an explicit isomorphism between the superalgebras
of global sections of  the maximal compact subgroup $K$ of $G$ defined as in
\ref{Kdef} and the maximal compact sugroup $K_{shcp}$ corresponding
to the SHCP $(K_0,\fk)$. 
\end{corollary}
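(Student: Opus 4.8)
The plan is to assemble the corollary as a chain of identifications, each of which has already been supplied earlier in the paper. Concretely, the first equality $\cO(K)=\cO(G)^{\sigma'}$ is nothing but Definition \ref{Kdef}, the very definition of the compact form $K$ as the algebraic supergroup represented by $\cO(G)^{\sigma'}$. The last equality $\cO(K_0,\fk)=\cO(K_{shcp})$ is the defining formula $\cO(G_0,\fg)=\Hom_{\cU(\fg_0)}(\cU(\fg),\cO(G_0))$ specialized to the SHCP $(K_0,\fk)$, combined with the equivalence of categories that gives $\cO(K_{shcp})\cong\cO(K_0,\fk)$. The middle isomorphism $\cO(G_0,\fg)^{s,\sigma_0}=\cO(K_0,\fk)$ is precisely the content of the preceding Proposition, whose proof exhibits the explicit map $\psi: f\mapsto f|_{\cU(\fk)}$. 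So the only genuinely new link to be forged is $\cO(G)^{\sigma'}\cong\cO(G_0,\fg)^{s,\sigma_0}$.

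For that remaining isomorphism I would start from the canonical identification $\cO(G)\cong\cO(G_0,\fg)$ recalled just before the Proposition, namely $s\mapsto\big(X\mapsto(-1)^{|X|}|D_X s|\big)$, and track how the two involutions act on either side. On the left we have $\sigma'$, which by Observation \ref{inv-obs} induces $\cU(\hat s)$ on $\cU(\fg)\subset\cU(\fg)^o$, i.e.\ on universal enveloping algebras it is exactly the involution $s$ extended to $\cU(\fg)$ (the one satisfying $\cU(\fg)^s=\cU(\fk)$); on the reduced part $G_0$ it restricts to the ordinary involution $\sigma_0$ whose fixed points are $\cO(G_0)$-valued, i.e.\ it is the $\sigma^0$ of \cite{St}. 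Under the identification $\cO(G)\cong\Hom_{\cU(\fg_0)}(\cU(\fg),\cO(G_0))$, an automorphism built from $s$ on the source $\cU(\fg)$ and $\sigma_0$ on the target $\cO(G_0)$ corresponds exactly to $\sigma'$ on $\cO(G)$ — this is the compatibility already used implicitly in the Proposition $\Lie(K)=\fk$ earlier, where $\cU(s)$ on $\cU(\fg)$ was matched with $\sigma'$. Taking fixed points on both sides then yields $\cO(G)^{\sigma'}\cong\big(\cO(G_0,\fg)\big)^{(s,\sigma_0)}=\cO(G_0,\fg)^{s,\sigma_0}$, which is the set $\{f:\cU(\fg)\to\cO(G_0)\mid f\circ s=f,\ \sigma_0\circ f=f\}$ appearing in the Proposition.

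I would then simply concatenate: $\cO(K)=\cO(G)^{\sigma'}\cong\cO(G_0,\fg)^{s,\sigma_0}=\cO(K_0,\fk)=\cO(K_{shcp})$, and observe that each arrow is a morphism of superalgebras (in fact of Hopf superalgebras, since all the maps involved — the SHCP identification, $\psi$, and the conjugation-twist $c$ of Observation \ref{inv-obs} — are Hopf morphisms), so the composite is an explicit isomorphism of Hopf superalgebras. Finally, since $\cO(K)$ represents $K$ and $\cO(K_{shcp})$ represents the supergroup attached to the SHCP $(K_0,\fk)$, Yoneda (Observation \ref{facts}) upgrades this to an isomorphism of the supergroups $K\cong K_{shcp}$, matching the earlier remark that the two constructions share the same $K_0$, the same $\fk$, and the same $K_0$-action on $\fk$.

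The main obstacle is the bookkeeping in the middle isomorphism: one must verify that under the concrete formula $s\mapsto(X\mapsto(-1)^{|X|}|D_X s|)$ the semilinear involution $\sigma'$ on $\cO(G)$ really does go over to the pair $(s,\sigma_0)$ acting on $\Hom_{\cU(\fg_0)}(\cU(\fg),\cO(G_0))$ — i.e.\ that the left-invariant differential operator $D_{s(X)}$ applied to $\sigma'(f)$ reproduces $\sigma_0$ applied to $D_X f$, with the sign $(-1)^{|X|}$ behaving correctly. This is essentially a naturality statement for the SHCP equivalence of \cite{cf} with respect to (semilinear) automorphisms, and once it is granted — as it was already granted implicitly in proving $\Lie(K)=\fk$ — the rest of the corollary is a formal chaining of previously established isomorphisms.
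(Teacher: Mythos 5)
Your proof is correct and follows the same route the paper intends: the corollary is exactly the concatenation of Definition \ref{Kdef}, the compatibility of $\sigma'$ with the pair $(s,\sigma_0)$ under the identification $\cO(G)\cong\cO(G_0,\fg)$ (which the paper leaves implicit, having already matched $\cU(\hat s)$ with $\sigma'$ in Observation \ref{inv-obs}), the preceding Proposition, and the SHCP equivalence applied to $(K_0,\fk)$. The paper supplies no separate proof of the corollary, and your singling out of the middle isomorphism $\cO(G)^{\sigma'}\cong\cO(G_0,\fg)^{s,\sigma_0}$ as the one link needing verification is precisely where the (small amount of) content lies.
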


\end{document}